\documentclass{amsart}
\usepackage[english]{babel}
\usepackage{mathtools}
\usepackage{amssymb,amsfonts,thmtools}

\usepackage{hyperref}

\usepackage{float}

\renewcommand{\Re}{\operatorname{Re}}
\renewcommand{\Im}{\operatorname{Im}}

\newcommand{\z}[1]{z_{#1}}
\newcommand{\bz}[1]{\Bar{z}_{#1}}
\newcommand{\dz}[1]{\partial_{z_{#1}}}
\newcommand{\dw}{\partial_{w}}
\newcommand{\g}{\mathfrak{g}}
\newcommand{\gRe}{\mathfrak{g}_{0}^{\Re}}
\newcommand{\gIm}{\mathfrak{g}_{0}^{\Im}}
\newcommand{\gNil}{\mathfrak{g}_0^{\operatorname{Nil}}}

\newcommand{\gmu}[1]{\mathfrak{g}_{-\mu_{#1}}}

\newcommand{\bbZpos}{\mathbb{Z}^{>0}}
\newcommand{\bbZnon}{\mathbb{Z}^{\geq 0}}

\newcommand{\qaq}{\quad \text{and} \quad}

\DeclareMathOperator{\hol}{hol}

\newcommand{\bbZ}{\mathbb Z}

\newcommand{\bbR}{\mathbb R}
\newcommand{\bbC}{\mathbb C}

\numberwithin{equation}{section}

\newtheorem{theorem}{Theorem}[section]
\newtheorem{corollary}[theorem]{Corollary}
\newtheorem{lemma}[theorem]{Lemma}
\newtheorem{proposition}[theorem]{Proposition}
\newtheorem{remark}[theorem]{Remark}

\newtheorem{definition}[theorem]{Definition}
\newtheorem{example}[theorem]{Example}

\usepackage[para,online,flushleft]{threeparttable}

\title[Polynomial models in $\bbC^3$]{
Classification of polynomial models  without 2-jet~determination in $\bbC^3$}
\author[M. Kolář]{Martin Kol\'a\v r}
\author[P. Liczman]{Petr Liczman}
\author[F. Meylan]{Francine Meylan}

\address{M. Kol\'a\v r, P. Liczman: Department of Mathematics and Statistics, Masaryk University,
Kotlarska 2,  611 ~37 Brno, Czech Republic}
\email{mkolar@math.muni.cz}
\email{liczman@mail.muni.cz}

\address{ F. Meylan: Department of Mathematics,
University of Fribourg, CH 1700 Perolles, Fribourg}
\email{francine.meylan@unifr.ch}

\thanks{The first two authors were supported by the GAČR grant GA21-09220S}

\begin{document} 
\begin{abstract}
An intriguing phenomenon regarding
Levi-degenerate hypersurfaces is the existence of nontrivial infinitesimal symmetries with vanishing 2-jets at a~point. In this work we consider polynomial models of Levi-degenerate real hypersurfaces in $\bbC^3$
of finite Catlin multitype. Exploiting the structure of the corresponding Lie algebra, we characterize completely models without 2-jet determination, including an explicit description of their symmetry algebras.
\end{abstract}
\keywords{Infinitesimal CR automorphisms, Levi degenerate manifolds, Catlin multitype}

\maketitle
\section{Introduction}
Our main motivation in this paper is the local equivalence problem for real hypersurfaces in complex space, as formulated by Poincar\'e in \cite{Po}. We are interested in the singular case, when the Levi form is degenerate at a point of the hypersurface.
This leads naturally to the study of holomorphically nondegenerate polynomial models, which generalize the model hyperquadrics from the classical Chern-Moser theory \cite{CM}.

The singularity of the Levi form prevents from using standard differential geometric techniques of Cartan, Chern, Tanaka \cite{C32,C2, T} based on vector bundles, since the rank of the form may change from point to point.
On the other hand, as it turns out, the analytic normal form approach to the equivalence problem, as developed by Moser and inspired by techniques coming from dynamics, is applicable to this setting \cite{KZ14, KZ15}.  In particular, Kol\'a\v r, Meylan and Zaitsev in \cite{KMZ14} studied a generalization of the Chern-Moser operator to polynomial models of finite Catlin multitype and described explicitly the structure of the Lie algebra of symmetries, as a graded Lie algebra.

An immediate consequence of the classical Chern-Moser theory is a sharp 2-jet determination result for local automorphisms of Levi nondegenerate hypersurfaces. While finite jet determination still holds on polynomial models in the setting of
 \cite{KMZ14}, the possible order of determinacy is not uniformly bounded, and there exist infinitesimal symmetries determined by jets of arbitrarily high order. In fact, this phenomenon presents a main obstruction to a generalization of the normal form constructions of \cite{CM}
and  \cite{K05, KZ14, KZ15}.
 
 In fact, existence or nonexistence of higher order symmetries on CR manifolds is a problem of its own great interest and has been studied intensively in various settings. There has been also intensive research on characterizing finite jet determination in general (\cite{LM, Z02} and references therein).

  Classifying all possible kernels of the Chern-Moser operator, and hence all Lie algebras of infinitesimal symmetries of such polynomial models, does not seem tractable in full generality. Note that $\mathbb C^2$ case is well understood and rather simple. It leads to three different types of models, two exceptional ones - circular and tubular, and the generic type. In contrast, already in $\mathbb C^3$ the problem becomes highly nontrivial.

In this paper we address the question of classifying the models which violate 2-jet determination,  that is, admitting   so called exotic symmetries. We also describe which other symmetries are compatible with an exotic symmetry. Observe that in this case $\dim \g \geq 4$. The results can be summarized by the given table \ref{tab:result_table}. (See Section \ref{sec:preliminaries} for notation.)

\begin{table}[H]
\scalebox{1}{
\begin{threeparttable}
\begin{tabular}{|r||r|r|r|r|c|c|}
\hline
$\g$ & $\g_t$& $\gRe$ & $\gIm$  &$\g_1$ &  Model & Reference\\
\hline
10&2&1&1&1& $\Im w = \Re\left(\bz1\z2^\alpha\right)$ &  \cite{KoMe11} \\
7&1&1&1&1& $\Im w = |\z1|^{2k}(\Re \z2)^m$ &  \autoref{gc+tub} \\
6&0&1&1&1& $\Im w = |\z1|^{2k}|\z2|^{2l}\left(\Re\z1^{\alpha}\z2^{\beta}\right)^m$ & \  \autoref{gc+re+im}\\
6&2&0&1&0& $\Im w = \Re \bz1\z2^{2\alpha-1} + |\z2|^{2\alpha}$&  \autoref{gc+tub} \\
5&2&0&0&0& $\Im w = \Re \bz1\z2^\alpha + Q(\z2, \bz2)$&  \autoref{gc+tub}\\
5&1&1&0&0& $\Im w = \Re\z2^\alpha\Re\bz1\z2^\alpha$ &  \autoref{gc+tub} \\
4&0&1&0&0& See \eqref{mod:gcre}         &  \autoref{gc+real} \\
4&0&0&1&0& See \eqref{mod:gcim}         & \autoref{gc+imag} \\
4&1&0&0&0& $\Im w = \Re\z2^\alpha\Re\bz1\z2^\alpha + Q(\z2, \bz2)$& \autoref{gc+tub} \\
\hline
\end{tabular}
\caption{}
\label{tab:result_table}
\end{threeparttable}
}
\end{table}
This table gives a classification  of the  models with respect to the dimension of the full symmetry algebra and, more precisely, with respect to   the dimensions of some of  its graded components that are compatible with the existence of an exotic symmetry.   The common $3$-dimensional part, $\g_c\oplus\langle W, E \rangle$ is omitted for all models in the table (here $W=\dw,$ where $w$ is the transversal coordinate, and $E$ is the Euler field). Also note that in $\bbC^3$ we always have $\dim\g_c\leq1$ \cite{KM16}.

 Note that the first model has an extra $2$-dimensional factor, $\g_n$, as recalled in~\eqref{models_with_gn}.
Further details on the models and their parameters are provided in the respective given  references.

The structure of the paper is as follows: In Section \ref{sec:preliminaries}, we recall the needed background, collect useful theorems and fix notation. 
 In Section \ref{sec:gc+rot}, we study the existence of an exotic symmetry together with rotations; in Section \ref{sec:gc+gt}, we explore exotic symmetries in combination with tubular symmetries. The general case, $\dim\g=3$, is treated in section \ref{sec:gc3dim}.

\section{Preliminaries}\label{sec:preliminaries}

Our goal is to characterize holomorphically nondenegerate polynomial models of finite Catlin multitype of Levi-degenerate hypersurfaces in $\bbC^3$ given by
\begin{equation}\label{model}
    M_P=\{ (z,w)\in \bbC^{2}\times\mathbb{C}\ |\ \Im w = P(z,\overline{z})\},
\end{equation}
where $P$ is a real valued weighted-homogeneous polynomial of weight one with respect to the Catlin multitype weights.

In this section, we fix notation and collect useful terminology and theorems from the literature. The statements are adapted for our needs in complex dimension 3.

\subsection{Catlin multitype}

We use nonnegative rational weights. We always have weight 1 for the complex transversal variable $w$. For tangential variables we need the following
\begin{definition}[Catlin, \cite{C}]
A weight is an $n$-tuple of rational numbers $\Lambda = (\lambda_1, \dots, \lambda_n)$ such that 
$$0\leq \lambda_k \leq \frac{1}{2} \qaq \lambda_k \geq \lambda_{k+1}$$

A weight $\Lambda$ is distinguished for $M$ if there exist local coordinates (at the origin) in which $M$ is given by
$$ \Im w = P(z,\Bar{z}) + o_\Lambda (1)$$
where $P$ is a polynomial of $\Lambda$-weighted degree 1 not containing any pluriharmonic terms. The $o_\Lambda (1)$ term vanishes up to the weighted order 1.

Denote $\Lambda_M=(\mu_1, \dots, \mu_n)$ the infimum of all distinguished weights in lexicographic order. The (Catlin) multitype is an $n$-tuple $(m_1, \dots, m_n)$ given by

$$
m_j=
\begin{cases}
\frac{1}{\mu_j}, \text{ if } \mu_j\neq 0,\\
\infty, \text{ if } \mu_j = 0.
\end{cases}
$$
\end{definition}

Multitype is clearly an invariant. The hypersurface $M$ has \emph{finite multitype} (or \emph{is of finite multitype}) if $m_n < \infty$, or equivalently, all weights are positive. 
For finite multitype hypersurfaces, the Lemma 3.1 from \cite{K10} implies that for each~$k$ there are nonnegative integers $a_1,\dots, a_k$ with $a_k>0$ such that
\begin{equation}\label{weights_monomial_condition}
    \sum_{j=1}^k a_j\lambda_j = 1.
\end{equation}
Thanks to this condition, there are only finitely many weights with $\lambda_n>\epsilon>0$. The infimum is therefore attained as a distinguished weight, which, in turn, provides us with \emph{multitype coordinates} and a model for $M$ given by
$$M_P = \{ \Im w = P(z,\Bar{z}) \}.$$
This model is not unique (as multitype coordinates are not unique), but it is shown in \cite{K10} that all such models are biholomorphically equivalent; hence, the model is an invariant.

\subsection{The symmetry algebra and its graded parts}

As an application of the generalized Chern-Moser theory, it is proved in \cite{KMZ14} that the Lie algebra of infinitesimal symmetries of a finite multitype holomorphically nonnedegerate model hypersurface $M_P$, denoted by
$\g=\hol(M_{P},0)$, admits a weighted grading given by
\begin{equation}
\g = \g_{-1} 
\oplus \g_{- \mu_1}\oplus \g_{-\mu_2} 
\oplus \g_{0}
\oplus \g_c 
\oplus \g_n
\oplus \g_{1},
\label{algebra}
\end{equation}
where the weights are given by the multitype. We briefly recall description of each factor: $\g_{-1}=\langle W \rangle $, where $W=\dw$. Symmetries of weights in the interval $(-1,0)$ are called \emph{tubular}. The factor $\g_c\oplus\g_n$ consists of symmetries of weights strictly between $0$ and $1$. Vector fields in $\g_c$ commute with $W$, while fields in $\g_n$ do not. Nontrivial elements of $\g_c$ are called \emph{exotic symmetries} (or \emph{generalized rotations}). In $\bbC^3$ a nontrivial $\g_n$ is rare, indeed in \cite{KM19} it has been shown that occurs only for models equivalent to 
\begin{align}\label{models_with_gn}
    \Im w &= \Re(\z1\bz2^l),\text{\, where }l>1, & \dim \g &= 10, \\
    \intertext{or to }
    \Im w &= |\z1|^2\pm |\z2|^{2l},\text{\, where }l>1, & \dim \g &= 9. \label{mod:9dim}
\end{align}
Vector fields that commute with $W$ are called \emph{rigid}; in other words, this means their coefficient functions are independent of the variable $w$.
The factor $\g_0$ consists of vector fields of weight zero. There is always present a unique (up to a real multiple) non-rigid field called the (multitype) Euler field $E$, which takes the following form in every multitype coordinates
\begin{equation}
E = w \dw + \mu_1 \z1\dz1+\mu_2\z2\dz2.
\end{equation}
The rigid elements of $\g_0$ are called \emph{rotations}. There exist multitype coordinates such that each rotation is linear and every tubular symmetry regular, i.e., nonvanishing at zero \cite{KMZ14}. A~rotation $Y$ can be decomposed further as $Y = Y^{\Re} + Y^{\Im} + Y^{\operatorname{Nil}}$, where each term is also a symmetry (\cite{KMZ14}, \cite{KM21}). This is obtained by putting $Y$ into its Jordan normal form, $Y^{\Re}$ is then the real diagonal part, similarly with the other two terms. If $Y=Y^{\Re}$ we call it a \emph{real rotation}. Analogously we have \emph{imaginary} and \emph{nilpotent} rotations. As a shorthand, we denote sets of these rotations by $\gRe, \gIm$ and $\gNil$. Notice these sets are not even subspaces, but thanks to a standard fact from linear algebra, we have the following lemma
\begin{lemma}
    If all rotations are diagonalizable and commute, then they are diagonalizable simultaneously. Consequently,
    these sets are subalgebras and we can write $$\g_0 = \gRe \oplus \gIm \oplus \langle E \rangle.$$
\end{lemma}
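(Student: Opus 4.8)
The plan is to derive everything from the classical linear-algebra fact that a commuting family of diagonalizable operators on a finite-dimensional space can be put in diagonal form in a single basis, and then to read off the algebraic consequences directly in that basis. Each rotation is a linear vector field, hence is represented by a matrix acting on the linear coordinates, and the Lie bracket of two such fields is, up to sign, the commutator of the associated matrices. Thus the hypothesis says precisely that the rotations form a commuting family of matrices, each diagonalizable over $\bbC$.

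First I would establish the simultaneous diagonalization. The standard argument proceeds by induction on the dimension of the underlying space: if every rotation is scalar there is nothing to do; otherwise one picks a non-scalar rotation $A$ and splits the space into the eigenspaces of $A$. Since every other rotation commutes with $A$ it preserves each eigenspace, and the restriction of a diagonalizable operator to an invariant subspace is again diagonalizable (its minimal polynomial divides one with distinct roots), so the inductive hypothesis applies on each strictly smaller eigenspace. Gluing the resulting bases yields a single basis $\mathcal B$ in which every rotation is diagonal. This is the only non-formal ingredient of the proof, and it is merely an invocation of the standard theorem.

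It remains to extract the consequences in the basis $\mathcal B$. There each rotation $Y$ is a diagonal matrix $\mathrm{diag}(\alpha_1,\dots,\alpha_n)$ with $\alpha_j\in\bbC$; since $Y$ is diagonalizable we have $Y^{\operatorname{Nil}}=0$, so the decomposition recalled above reduces to $Y=Y^{\Re}+Y^{\Im}$, and reading off real and imaginary parts entrywise gives $Y^{\Re}=\mathrm{diag}(\Re\alpha_1,\dots,\Re\alpha_n)$ and $Y^{\Im}=\mathrm{diag}(i\Im\alpha_1,\dots,i\Im\alpha_n)$, both of which are symmetries by the cited result. The conceptual crux — and the reason simultaneous diagonalization is needed — is that in the common basis $\mathcal B$ the passages $Y\mapsto Y^{\Re}$ and $Y\mapsto Y^{\Im}$ become additive and $\bbR$-linear, since for rotations $Y,Z$ the matrix of $Y+Z$ is again diagonal and $\Re(\alpha_j+\beta_j)=\Re\alpha_j+\Re\beta_j$. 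Hence the a priori merely set-theoretic $\gRe$ and $\gIm$ are genuine real subspaces of $\g$, and consisting of diagonal matrices they commute, so they are abelian subalgebras. Finally every rotation lies in $\gRe+\gIm$, while $\gRe\cap\gIm=0$ because a matrix that is simultaneously real-diagonal and purely imaginary-diagonal must vanish; thus the rigid part of $\g_0$ equals $\gRe\oplus\gIm$. Adjoining the one-dimensional span $\langle E\rangle$ of the non-rigid Euler field, which together with the rotations exhausts $\g_0$, yields $\g_0=\gRe\oplus\gIm\oplus\langle E\rangle$.
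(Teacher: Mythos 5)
Your proof is correct and takes essentially the same route as the paper: the paper offers no written proof at all, merely invoking ``a standard fact from linear algebra,'' which is exactly the simultaneous-diagonalization argument you spell out, followed by reading off the consequences ($\bbR$-linearity of $Y\mapsto Y^{\Re}$, $Y\mapsto Y^{\Im}$, triviality of the intersection, and adjoining $\langle E\rangle$) in the common basis. Your version is simply a fully written-out expansion of what the authors leave implicit, relying as they do on the cited fact that $Y^{\Re}$ and $Y^{\Im}$ are again symmetries.
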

 We will see in \autoref{nonil} that $\g_c$ is not compatible with a nilpotent rotation. Thanks to the following proposition, due to the first author (Lemma 4.2, \cite{K21}), we can assume throughout the entire paper that $\g_0$ can be diagonalized. 
\begin{lemma} \label{lem:4.2}
    Let $\g_0$ be the graded component of weight $0$ of the Lie algebra $\g$ of a polynomial model $M_P.$ Suppose that $\g_0$ is nonabelian (with respect to the commutator of vector fields). Then either $M_P$ admits a nilpotent rotation, or $M_P$ is equivalent to
    \begin{equation}\label{mod:quadric^m}
    \Im w = \left( |\z1|^2 + |\z2|^2 \right)^m
    \end{equation}
    for some integer $m>1$. In the latter case $\dim \g = 7$ and $\dim \g_0=5$.
\end{lemma}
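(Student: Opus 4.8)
The plan is to translate the statement into the language of linear rotations and then use the Jordan decomposition together with Lie's theorem to pin down the only nonabelian possibility. First I would fix multitype coordinates in which every rotation is a linear vector field (as recalled above) and identify a rotation $Y = \sum_{i,j} a_{ij}\z{j}\dz{i}$ with its coefficient matrix $A \in \mathfrak{gl}(2,\bbC)$; under this identification the commutator of vector fields becomes the matrix bracket. Since $E$ has weight $0$ it satisfies $[E,X]=0$ for every $X\in\g_0$, so $E$ is central in $\g_0$, and $\g_0$ is nonabelian if and only if the subalgebra $\mathfrak{r}\subseteq\mathfrak{gl}(2,\bbC)$ of rotation matrices is nonabelian. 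Next I would invoke the fact recalled before the statement that for each rotation $Y$ the pieces $Y^{\Re}, Y^{\Im}, Y^{\operatorname{Nil}}$ are again symmetries, so $Y^{\operatorname{Nil}}$ is a nilpotent rotation. Hence if some rotation fails to be semisimple, the first alternative of the conclusion holds, and it remains to treat the case in which every element of $\mathfrak{r}$ is semisimple while $\mathfrak{r}$ is nonabelian.

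The heart of the argument is to show that an all-semisimple nonabelian $\mathfrak{r}$ must contain a copy of $\mathfrak{su}(2)$ acting standardly on $\bbC^2$. I would first observe that $\mathfrak{r}$ cannot be solvable: by Lie's theorem its complexification can be put in upper-triangular form, so $[\mathfrak{r},\mathfrak{r}]$ would consist of strictly upper-triangular, hence nilpotent, matrices; since all elements of $\mathfrak{r}$ are semisimple this forces $[\mathfrak{r},\mathfrak{r}]=0$, contradicting nonabelianity. Therefore $\mathfrak{r}$ is non-solvable, and after splitting off the central scalars $\bbC I$ (semisimplicity and the bracket depend only on the traceless part) its image in $\mathfrak{sl}(2,\bbC)$ contains a nontrivial semisimple real subalgebra. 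Among the real forms available inside $\mathfrak{sl}(2,\bbC)$ only the compact one, $\mathfrak{su}(2)$, consists entirely of semisimple elements, since both $\mathfrak{sl}(2,\bbR)$ and $\mathfrak{sl}(2,\bbC)$ contain nilpotents. Thus $\mathfrak{r}\supseteq\mathfrak{su}(2)$ in its irreducible representation on $\bbC^2$.

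Finally I would exponentiate to obtain a linear $SU(2)$-action on $(\z1,\z2)$ that preserves $M_P$ and acts trivially on $w$; invariance of the defining equation gives $P(g\cdot z,\overline{g\cdot z}) = P(z,\bar z)$ for all $g\in SU(2)$. As $SU(2)$ is transitive on the spheres $|\z1|^2+|\z2|^2=\text{const}$, the real polynomial $P$ must be a function of $|\z1|^2+|\z2|^2$; the presence of off-diagonal rotations forces $\mu_1=\mu_2$, and weighted homogeneity of weight one then yields $P = c\,(|\z1|^2+|\z2|^2)^m$ with $2m\mu_1=1$. A dilation removes $c$, and Levi degeneracy excludes $m=1$, giving the asserted model with $m>1$. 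The equalities $\dim\g=7$ and $\dim\g_0=5$ then follow from a direct computation of the symmetry equations for this explicit model: the rotations fill out $\mathfrak{u}(2)$ of real dimension $4$ (the real scalar direction being absorbed into $E$), so that $\g_0=\mathfrak{u}(2)\oplus\langle E\rangle$. I expect the classification step of the second paragraph to be the main obstacle, namely ruling out every nonabelian all-semisimple configuration except $\mathfrak{su}(2)$; the reduction via Lie's theorem and the list of real forms of $\mathfrak{sl}(2,\bbC)$ are the essential tools there.
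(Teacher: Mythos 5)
The first thing to note is that this paper contains no proof of this lemma: it is imported from Lemma 4.2 of \cite{K21}, so there is no in-paper argument to compare against, and your proposal has to be judged on its own merits. Judged so, its skeleton is correct and follows what is essentially the natural (and, as far as the structure of \cite{K21} goes, the intended) route: $E$ is central in $\g_0$ because bracketing with $E$ multiplies a weighted homogeneous field by its weight, so nonabelianity is a property of the algebra $\mathfrak{r}$ of rotations; the decomposition $Y=Y^{\Re}+Y^{\Im}+Y^{\operatorname{Nil}}$ into symmetries hands you the nilpotent alternative; Lie's theorem excludes a solvable nonabelian $\mathfrak{r}$ with all elements semisimple (commutators would be both nilpotent and semisimple, hence zero); and $SU(2)$-invariance together with homogeneity forces $P=c\left(|\z1|^2+|\z2|^2\right)^m$, with $m>1$ coming from the standing Levi-degeneracy assumption.

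Four places need tightening. First, your Levi subalgebra $\mathfrak{s}\cong\mathfrak{su}(2)$ lives in the image $\pi(\mathfrak{r})\subseteq\mathfrak{sl}(2,\bbC)$ under the trace projection, not a priori in $\mathfrak{r}$ itself; the fix is that scalar shifts do not change brackets, so $[\mathfrak{r},\mathfrak{r}]=[\pi(\mathfrak{r}),\pi(\mathfrak{r})]\supseteq[\mathfrak{s},\mathfrak{s}]=\mathfrak{s}$, and $[\mathfrak{r},\mathfrak{r}]\subseteq\mathfrak{r}$ since brackets of rigid weight-zero symmetries are again rotations. Second, the step ``among the real forms only $\mathfrak{su}(2)$ consists of semisimple elements'' is phrased too loosely: an abstract copy of $\mathfrak{sl}(2,\bbR)$ inside $\mathfrak{gl}(2,\bbC)$ is not a priori a real form in standard position; what you actually need (and what is true) is that in any finite-dimensional representation of a semisimple Lie algebra the abstract Jordan decomposition is realized by the matrix one, so any embedded $\mathfrak{sl}(2,\bbR)$ contains a nonzero nilpotent matrix. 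Third, the $GL(2,\bbC)$-conjugation placing $\mathfrak{s}$ in standard position is a legitimate change of multitype coordinates only because $\mu_1=\mu_2$; you invoke the equality of weights only afterwards, whereas it should come first --- and it does follow immediately, since when $\mu_1\neq\mu_2$ every weight-zero linear field is diagonal, so $\mathfrak{r}$ would be abelian. Fourth, the dimension count at the end is genuinely incomplete: your computation of $\g_0=\mathfrak{u}(2)\oplus\langle E\rangle$ (dimension $5$) is right, although the parenthetical about the real scalar direction being ``absorbed into $E$'' is misleading --- the field $\z1\dz1+\z2\dz2$ is not a rotation at all, and $E$ is the non-rigid extra direction --- but $\dim\g=7$ additionally requires $\g_t=\g_c=\g_n=0$ and $\dim\g_1=1$ for $\Im w=\left(|\z1|^2+|\z2|^2\right)^m$, none of which your $\g_0$ computation addresses. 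Within this paper's toolkit one gets $\g_1\neq0$ because the model is balanced (\autoref{g1}) and $\g_n=0$ from the classification recalled in \eqref{models_with_gn} and \eqref{mod:9dim}, but the vanishing of the tubular and exotic parts and the equality $\dim\g_1=1$ still require a direct check (or a citation of \cite{K21}, where these counts are carried out).
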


\begin{definition}
    A vector field $Y$ satisfying 
        $\Re(YP) = P$
    is called \emph{real reproducing field}. If the vector field satisfies $YP=P$ we call it \emph{complex reproducing field}.
\end{definition}
The Euler field is clearly a real reproducing field, complex reproducing fields are relevant for the $\g_1$ factor by \autoref{g1}.
The following lemmas are straightforward and can be proved by an useful, albeit elementary, observation. Action of a diagonal linear field reproduces monomials in the following sense: $Y(z^a\bz{}^b) = cz^a\bz{}^b$ for some number $c$ that can be thought of as a weight of the monomial.
\begin{lemma}\label{real_rot}
    Let $Y$ be a diagonal linear vector field $Y=\lambda_1\z1\dz1+\lambda_2\z2\dz2$, where $\lambda_1,\lambda_2\in\bbR$. $Y$ is a real rotation of the model $M_P$, given by \eqref{model} if and only if each monomial in $P$ is of  weight $0$ with respect to the weights $(\lambda_1, \lambda_2)$, i.e. for each monomial 
    $\z1^{a_1}\z2^{a_2}\bz1^{b_1}\bz2^{b_2}$
    in $P$ 
    we have
    \begin{equation*}
        \lambda_1(a_1+b_1) + \lambda_2(a_2+b_2) = 0.
    \end{equation*}
\end{lemma}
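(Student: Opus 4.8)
The plan is to translate the condition ``$Y$ is a real rotation of $M_P$'' into an infinitesimal tangency requirement and then read it off monomial by monomial. Write the defining function as $\rho = \Im w - P(z,\bar z)$. By definition a rotation is a rigid weight-zero symmetry, that is, a holomorphic vector field whose real part is tangent to $M_P$; this tangency amounts to the vanishing of $(Y+\bar Y)\rho$ along $\{\rho=0\}$.

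First I would compute $(Y+\bar Y)\rho$. Since neither $Y=\lambda_1\z1\dz1+\lambda_2\z2\dz2$ nor $\bar Y$ involves a derivative in $w$ or $\bar w$, both annihilate $\Im w$, so the only surviving contribution is $(Y+\bar Y)\rho=-(Y+\bar Y)P=-2\Re(YP)$, where I used that $P$ and the $\lambda_j$ are real. Hence $Y$ is a symmetry if and only if $\Re(YP)=0$ on $M_P$. Next I would note that $\Re(YP)$ is a polynomial in $z,\bar z$ with no dependence on $w$; as $M_P$ is a graph over the variables $(z,\Re w)$, such a function vanishes on $M_P$ exactly when it vanishes identically. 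Thus the symmetry condition becomes the polynomial identity $\Re(YP)\equiv 0$.

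The core of the argument is the monomial expansion, via the observation recorded just before \autoref{real_rot}. Acting on a single monomial $\z1^{a_1}\z2^{a_2}\bz1^{b_1}\bz2^{b_2}$, the field $Y$ differentiates only the holomorphic variables and so multiplies it by its holomorphic weight $\lambda_1 a_1+\lambda_2 a_2$. Taking the real part, and using that $P$ is real-valued --- which pairs the coefficient of $z^a\bar z^b$ with the conjugate of the coefficient of $z^b\bar z^a$ --- symmetrizes the holomorphic weight into the total weight: the coefficient of $\z1^{a_1}\z2^{a_2}\bz1^{b_1}\bz2^{b_2}$ in $\Re(YP)$ equals its coefficient in $P$ times $\tfrac12\bigl(\lambda_1(a_1+b_1)+\lambda_2(a_2+b_2)\bigr)$.

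Finally, since distinct monomials are linearly independent, $\Re(YP)\equiv 0$ holds precisely when every monomial actually occurring in $P$ satisfies $\lambda_1(a_1+b_1)+\lambda_2(a_2+b_2)=0$, which is the asserted equivalence, the converse direction being immediate from the same coefficient formula. The whole argument is elementary; the only points deserving care are the reduction from ``vanishing on $M_P$'' to ``vanishing identically,'' and the bookkeeping by which the real part turns the purely holomorphic weight seen by $Y$ into the symmetric total weight --- the step where reality of $P$ is essential.
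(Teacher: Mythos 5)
Your proof is correct and follows essentially the same route as the paper: the paper disposes of this lemma with exactly the observation you use, namely that a diagonal linear field reproduces each monomial $\z1^{a_1}\z2^{a_2}\bz1^{b_1}\bz2^{b_2}$ with the factor $\lambda_1 a_1+\lambda_2 a_2$, so that reality of $P$ symmetrizes this into the total weight $\lambda_1(a_1+b_1)+\lambda_2(a_2+b_2)$ and the symmetry condition $\Re(YP)\equiv 0$ decouples monomial by monomial. Your additional care in reducing tangency on $M_P$ to the identical vanishing of the $w$-independent polynomial $\Re(YP)$ is exactly the detail the paper leaves implicit.
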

There is a similar condition 
for the existence of imaginary rotations:
\begin{lemma}\label{imag_rot}   
    Let $Y$ be a diagonal linear vector field $Y=i(\lambda_1\z1\dz1+\lambda_2\z2\dz2)$, where $\lambda_1,\lambda_2\in\bbR$. $Y$ is an imaginary rotation of model $M_P$ \eqref{model} if and only if for each monomial $\z1^{a_1}\z2^{a_1}\bz1^{b_1}\bz2^{b_2}$ in $P$ we have
    \begin{equation*}
        \lambda_1(a_1-b_1) + \lambda_2(a_2-b_2) = 0.
    \end{equation*}
\end{lemma}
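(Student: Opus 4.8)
The plan is to reduce the statement to the infinitesimal tangency condition and then read off the constraint monomial by monomial, in complete parallel with \autoref{real_rot}. Writing \eqref{model} through the defining function $\rho = \Im w - P(\z{},\bz{})$, I note that the given field $Y = i(\lambda_1\z1\dz1 + \lambda_2\z2\dz2)$ carries no $\dw$ part and has $w$-independent coefficients; moreover it is already diagonal with purely imaginary eigenvalues and commutes with $E$, so it automatically sits in $\g_0$ and equals its own imaginary part. Hence the only thing to verify is that $Y$ is a symmetry at all, i.e. that the real field $Y + \bar Y$ is tangent to $M_P$. First I would compute $Y\rho = -YP$ and, using that $P$ is real so that $\overline{P_{\z{j}}} = P_{\bz{j}}$, that $\bar Y\rho = -\overline{YP}$; therefore $(Y+\bar Y)\rho = -2\Re(YP)$. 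Since this expression depends only on $(\z{},\bz{})$ and not on $w$, its vanishing on the graph $M_P$ is equivalent to the polynomial identity $\Re(YP)\equiv 0$.

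The second step is the elementary monomial bookkeeping flagged just before \autoref{real_rot}. For a monomial $\z1^{a_1}\z2^{a_2}\bz1^{b_1}\bz2^{b_2}$ occurring in $P$ with coefficient $c$, the operator $\z{j}\dz{j}$ multiplies it by $a_j$, so $Y$ multiplies it by $i(\lambda_1 a_1 + \lambda_2 a_2)$, while the conjugate term $\bar Y$ contributes $-i(\lambda_1 b_1 + \lambda_2 b_2)$. Summing over $P$ then gives $2\Re(YP) = \sum i\,c\,\bigl[\lambda_1(a_1 - b_1) + \lambda_2(a_2-b_2)\bigr]\,\z1^{a_1}\z2^{a_2}\bz1^{b_1}\bz2^{b_2}$. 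Because distinct monomials are linearly independent, $\Re(YP)\equiv 0$ holds if and only if each bracket with $c\neq 0$ vanishes, which is precisely the asserted condition $\lambda_1(a_1-b_1)+\lambda_2(a_2-b_2)=0$.

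There is no genuine obstacle here; the computation is routine, and the only points demanding care are bookkeeping. The presence of the factor $i$ (absent in the real case) means that a monomial and its conjugate \emph{subtract} rather than add, which is exactly what turns the real-rotation combination $(a_j + b_j)$ into the imaginary-rotation combination $(a_j - b_j)$ — the sole structural difference from \autoref{real_rot}. The other point I would state explicitly is that the reality of $P$ is the one hypothesis that cannot be dropped, since it is what lets me rewrite $\bar Y\rho$ as $-\overline{YP}$ and pass from tangency to the clean identity $\Re(YP)\equiv 0$.
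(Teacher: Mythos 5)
Your proof is correct and takes essentially the same approach the paper intends: the paper omits the argument, remarking only that it follows from the elementary observation that a diagonal linear field reproduces monomials, and your write-up is exactly that observation carried out in detail — reducing tangency for a rigid field to the polynomial identity $\Re(YP)\equiv 0$ and then reading off the coefficient of each monomial, where the conjugate contributions subtract rather than add. The structural contrast you highlight with \autoref{real_rot} (the combination $(a_j-b_j)$ in place of $(a_j+b_j)$, due to the factor $i$) is precisely the right point, and your handling of the reality of $P$ is the one hypothesis that makes the reduction work.
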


The following characterization follows from \cite{KMZ14} and \cite{JM}.
\begin{lemma}\label{g1}
    For a model \eqref{model} the following are equivalent:
    \begin{enumerate}
    \item The model admits a nontrivial $\g_1$.

    \item There exists a complex reproducing vector field for $P$. 
   
    \item There exist multitype coordinates and real numbers $\lambda_1, \lambda_2$ such that for each monomial term $\z1^{a_1}\z2^{a_2}\bz1^{b_1}\bz2^{b_2}$ in $P$ we have
    \begin{equation*}
        \lambda_1 a_1 + \lambda_2 a_2 = \lambda_1 b_1 + \lambda_2 b_2 = 1.
    \end{equation*}
    \end{enumerate}
\end{lemma}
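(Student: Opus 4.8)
The plan is to prove the cycle $(3)\Rightarrow(2)\Rightarrow(1)\Rightarrow(3)$. The implication $(3)\Rightarrow(2)$ is immediate: the diagonal linear field $Y=\lambda_1\z1\dz1+\lambda_2\z2\dz2$ acts on a monomial $\z1^{a_1}\z2^{a_2}\bz1^{b_1}\bz2^{b_2}$ by multiplication with $\lambda_1 a_1+\lambda_2 a_2$, since it differentiates only the holomorphic variables. The holomorphic-weight condition in $(3)$ therefore says exactly that $YP=P$, i.e.\ that $Y$ is complex reproducing.

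For $(2)\Rightarrow(1)$ I would exhibit an explicit weight-$1$ symmetry. Recall first that for a holomorphic field $X=f\dw+\sum_j g^j\dz{j}$, tangency of $\Re X$ to $M_P$ is the condition
\[
\Im f = 2\Re\Big(\sum_j g^j\dz{j}P\Big) \quad\text{on } M_P .
\]
Given a complex reproducing field $Y$, set $X=w^2\dw+wY$, so that $f=w^2$ and $\sum_j g^j\dz{j}P=w\,(YP)=wP$. Since $P$ is real the right-hand side equals $2P\,\Re w$, while on $M_P$ we have $w=\Re w+iP$, so $\Im(w^2)=2P\,\Re w$ as well; hence the equation holds and $X$ is a symmetry. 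Because the grading in \eqref{algebra} is a Lie-algebra grading of a weight-homogeneous model, every weighted-homogeneous component of a symmetry is again a symmetry; the weight-$1$ component of $X$ contains the term $w^2\dw$ and is thus a nonzero element of $\g_1$.

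The remaining implication $(1)\Rightarrow(3)$ is the substantial one, and is where I would invoke the structural results of \cite{KMZ14} and \cite{JM}. First I would pin down the shape of an arbitrary $X\in\g_1$: by weight count its $\dw$-coefficient has weight $2$ and its $\dz{j}$-coefficients have weight $1+\mu_j$, and solving the tangency equation degree by degree (as in \cite{KMZ14}) forces, after normalization, the $\dw$-coefficient to be a nonzero multiple of $w^2$ and the $\dz{j}$-coefficients to be of the form $w\,h^j(z)$ with $Y=\sum_j h^j\dz{j}$ complex reproducing; this yields $(1)\Rightarrow(2)$. To reach $(3)$ I would then replace $Y$ by its weighted-homogeneous part of weight $0$: since $P$ is weight-$1$ homogeneous, only this part contributes to $YP=P$, so we may assume $Y$ is a rigid weight-$0$ field. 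Using the normalization of multitype coordinates from \cite{JM}, such a reproducing field is conjugated to a diagonal linear one $\sum_j\lambda_j\z{j}\dz{j}$; reading off $YP=P$ monomial by monomial gives $\lambda_1 a_1+\lambda_2 a_2=1$, and applying the same identity to the complex-conjugate monomial (present in $P$ by reality) gives $\lambda_1 b_1+\lambda_2 b_2=1$. Finally, as these are real linear conditions on $(\lambda_1,\lambda_2)$ with real right-hand side, taking real parts shows the $\lambda_j$ may be chosen real, which is exactly $(3)$.

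The main obstacle is precisely this last normalization: controlling a \emph{general} complex reproducing field and conjugating it to diagonal linear form within the class of multitype coordinates. The homogeneity reduction to a weight-$0$ field is elementary, but eliminating the possible non-semisimple contributions — for instance a term $\z2^{k}\dz1$ occurring when $k\mu_2=\mu_1$ — is what genuinely relies on the coordinate normalizations of \cite{KMZ14} and \cite{JM}; once the reproducing field is diagonal, the realness of $\lambda_1,\lambda_2$ and the monomial bookkeeping are routine.
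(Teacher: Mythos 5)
The paper itself gives no proof of this lemma: it is imported as a known characterization, attributed in one line to \cite{KMZ14} and \cite{JM}. So your proposal supplies strictly more detail than the source text, and the parts you actually prove are proved correctly. The implication $(3)\Rightarrow(2)$ is the right one-line computation. Your $(2)\Rightarrow(1)$ is a complete elementary argument: the tangency identity $\Im f = 2\Re\bigl(\sum_j g^j\dz{j}P\bigr)$ on $M_P$ is correct, the verification $\Im(w^2)=2P\Re w=2\Re(wP)$ for $X=w^2\dw+wY$ is correct, and extracting the weight-one homogeneous component is legitimate precisely because $P$ is weighted homogeneous (the same fact that underlies the grading \eqref{algebra}); that component contains $w^2\dw$ and is therefore a nonzero element of $\g_1$. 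For $(1)\Rightarrow(3)$ you, like the paper, ultimately lean on \cite{KMZ14} and \cite{JM}. That is fair, but note that the clause ``solving the tangency equation degree by degree forces the $\dw$-coefficient to be a nonzero multiple of $w^2$ and the $\dz{j}$-coefficients to be of the form $wh^j(z)$'' is not routine bookkeeping: ruling out rigid ($w$-independent) components in an element of $\g_1$ is exactly the weight-one kernel computation for the generalized Chern--Moser operator, and it is essentially the entire content of what is being cited.

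Conversely, the step you single out as the main obstacle --- conjugating a reproducing field to diagonal form within multitype coordinates and making the $\lambda_j$ real --- is more elementary than you suggest and does not need the references. A rigid field of weight zero in multitype coordinates is linear except possibly for finitely many terms $c\,\z2^k\dz1$ with $k\mu_2=\mu_1$; its semisimple and nilpotent parts commute and act on the finite-dimensional space of polynomials of weighted degree one, so if $YP=P$ then Jordan decomposition of the operator already gives $Y_sP=P$ for the semisimple part $Y_s$, which is diagonalized by a weight-preserving triangular or linear change of coordinates, i.e.\ by a change of multitype coordinates; realness of the $\lambda_j$ then follows by taking real parts exactly as you do. So your division of difficulty is inverted: the diagonalization and realness are routine linear algebra, while the exclusion of rigid terms in $\g_1$ is the place where the cited structure theory genuinely enters.
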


\begin{remark}
    In coordinates given by (3) the complex reproducing field is expressed as
        $Y=\lambda_1\z1\dz1+\lambda_2\z2\dz2.$
    Note that $iY$ is then an imaginary rotation. Recall that the polynomial $P$ and the model satisfying these conditions are called \emph{balanced}.
\end{remark}

\subsection{Exotic symmetries}\label{sec:exotics}
The following is a characterization of exotic symmetries from \cite{KM16}.
\begin{definition}\label{xpair}
Let $X$ be a weighted homogeneous vector field.
A pair of finite sequences of holomorphic  weighted homogeneous polynomials $\{U_0, \dots, U_m\}$ and  $\{V_0, \dots, V_m\}$ is called an $X$-pair of  chains (of length $m+1$), if
\begin{equation}
X(U_m)=0, \ \ X(U_j) =A_j U_{j+1}, \ \ j=0, \dots, m-1,
\end{equation}
\begin{equation}
X(V_m)=0, \ \ X(V_j) =B_j V_{j+1}, \ \ j=0, \dots, m-1,
\end{equation}
where $A_j$, $B_j$ are nonzero complex numbers that satisfy
\begin{equation} A_j= - \overline B_{m-j-1}, \ \ j=0, \dots, m-1.
\end{equation}  
\end{definition}

\begin{theorem} \label{chainsum}
    Let $M_P$  be a holomorphically nondegenerate hypersurface  given by  \eqref{model},  which admits a nontrivial $X\in \g_c$. Then $P$ can be decomposed in the following way

\begin{equation}P_C=\sum_{j=1}^M T_j,\label{}
\end{equation} where each $T_j$ is given by
\begin{equation}\label{chainsumeq}
 T_j =\Re (\sum_{k=0}^{m_j}
  {U^j_{k}}
  {\overline {{V^j_{m_j -k}}}}),
  \end{equation}
where $\{{ {{U^j_{0}}, \dots, {U^j_{m_j}} }}\}$  and  $\{{ {{V^j_{0}}, \dots, {V^j_{m_j}} }}\}$ are  $X$-pairs of chains.
\end{theorem}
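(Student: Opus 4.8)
The plan is to convert the symmetry condition for $X$ into a single intertwining relation between a nilpotent operator and a Hermitian form, and then to read off the chain decomposition from the normal form of such a pair.

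First I would write $X = f_1\dz1 + f_2\dz2 + g\dw$ with $f_1,f_2,g$ weighted homogeneous and independent of $w$, which is automatic since $X\in\g_c$ commutes with $W=\dw$. Tangency of the realified field to $M_P$ is the single real equation $\Re(XP)=\tfrac12\Im g$, where I abbreviate $XP=f_1\dz1 P+f_2\dz2 P$ for the holomorphic directional derivative. The key preliminary reduction is that $g\equiv 0$: the $f_i$ have strictly positive weight, so they carry no constant term, and $P$ contains no pluriharmonic monomials, so $XP$ has no purely antiholomorphic part; comparing the purely holomorphic parts of the two sides of the tangency equation then forces $g\equiv 0$. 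Hence $X$ is purely tangential and the defining condition collapses to
\begin{equation*}
XP+\overline{XP}=0,\qquad\text{i.e.}\qquad \Re(XP)=0.
\end{equation*}

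Next I would regard $X$ as the weight-raising derivation $\hat X\colon U\mapsto f_1\dz1 U+f_2\dz2 U$ on holomorphic polynomials; it raises weight by the positive weight $\nu$ of $X$. Because $P$ is weighted homogeneous of weight one and $M_P$ has finite multitype, only finitely many holomorphic monomials occur, and on the finite-dimensional $\hat X$-invariant space they generate $\hat X$ is \emph{nilpotent}. Writing $P=\sum_{a,b}c_{ab}\,z^a\bar z^b$ (no pluriharmonic terms), I would identify $P$ with the Hermitian matrix $C=(c_{ab})$, so that reality of $P$ reads $C=C^{*}$. In these terms $\hat X$ becomes a nilpotent matrix $\mathcal X$, the quantity $XP$ corresponds to $\mathcal X C$ and $\overline{XP}$ to $C\mathcal X^{*}$, so the symmetry equation becomes the intertwining relation $\mathcal X C+C\mathcal X^{*}=0$. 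Equivalently, $\hat X$ is skew with respect to the Hermitian form defined by $C$, that is $\langle \hat X u,v\rangle_C=-\langle u,\hat X v\rangle_C$.

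The decomposition of $P$ is then the decomposition of the pair $(\mathcal X,C)$ into indecomposable blocks. I would first split the holomorphic space into Jordan chains of $\hat X$: a chain $\{U_0,\dots,U_m\}$ with $\hat X U_m=0$ and $\hat X U_j=A_jU_{j+1}$ is exactly a Jordan chain with prescribed scalars. The form $C$ then pairs chains, and each indecomposable block is either a single chain paired with itself or a hyperbolic pair of two chains $\{U_j\}$, $\{V_j\}$; invariance $\mathcal X C=-C\mathcal X^{*}$ forces the $C$-pairing to be concentrated on the anti-diagonal, so that $U_k$ pairs only with $V_{m-k}$ (consistently, every term $U_k\overline{V_{m-k}}$ then has the same weight, namely one). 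Collecting the blocks and taking real parts produces precisely the summands $T_j=\Re\!\big(\sum_{k}U^{j}_{k}\overline{V^{j}_{m_j-k}}\big)$, and matching the coefficient of $U_k\overline{V_{m-k+1}}$ in the identity $\Re(XT_j)=0$ yields the reality relation $A_j=-\overline{B_{m-j-1}}$ of \autoref{xpair}.

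The main obstacle is the structure theorem invoked in the last step: proving that a nilpotent operator which is skew-self-adjoint for a possibly \emph{degenerate} Hermitian form decomposes over $\bbC$ into exactly the self-paired and hyperbolic chain blocks described, with the pairing supported on the anti-diagonal. Two points require care. First, $C$ may be degenerate, and its kernel (holomorphic directions invisible to $P$) must be split off without disturbing the pairing. Second, the chain scalars $A_j,B_j$ must be tracked through the normalization of each Jordan chain so that the relation $A_j=-\overline{B_{m-j-1}}$ emerges exactly rather than merely up to sign or modulus; this is where the skew-adjointness of $\hat X$ relative to $C$, restricted to a single indecomposable block, is used in full.
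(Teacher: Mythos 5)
First, a point of reference: the paper itself gives no proof of Theorem~\ref{chainsum} --- it is recalled verbatim from \cite{KM16} --- so your outline can only be measured against the strategy of that reference, whose linear-algebraic skeleton (reduce to $\Re(XP)=0$, encode $P$ as a Hermitian coefficient matrix $C$, read the symmetry as $\mathcal{X}C+C\mathcal{X}^{*}=0$, decompose the pair into chain blocks) your proposal indeed shares; your reduction to $g\equiv 0$ is correct and cleanly argued. The first genuine gap is the nilpotency step. You claim that because $P$ is weighted homogeneous of weight one and the multitype is finite, ``on the finite-dimensional $\hat X$-invariant space they generate $\hat X$ is nilpotent.'' Neither half of this follows from the stated reasons: a weight-raising derivation need not be nilpotent (take $\hat X=i\z1^{2}\dz1$, of positive weight $\mu_1$; then $\hat X^{k}\z1=i^{k}k!\,\z1^{k+1}\neq 0$ for every $k$, so the $\hat X$-invariant space generated by the single monomial $\z1$ is infinite dimensional), and the span $W_P$ of the finitely many monomials occurring in $P$, while finite dimensional, need not be $\hat X$-invariant. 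Nilpotency is not a formal consequence of the grading; it must be extracted from the tangency equation itself. It can be: iterating your relation gives $\mathcal{X}^{k}C=(-1)^{k}C(\mathcal{X}^{*})^{k}$, whose columns say that for each holomorphic component $Q_b$ of $P$ (the coefficient of $\bar z^{b}$) the iterate $\hat X^{k}Q_b$ lies in the fixed finite-dimensional space $W_P$; every monomial of $W_P$ has weight strictly less than $1$ (since $P$ has no pluriharmonic terms), whereas $\hat X^{k}Q_b$ is weighted homogeneous of weight $\operatorname{wt}(Q_b)+k\nu>1$ as soon as $k\geq 1/\nu$; hence $\hat X^{k}Q_b=0$ for such $k$. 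So the logical order of your argument must be reversed: the intertwining relation comes first, nilpotency is its corollary, and the operator--form pair then lives on $V=\sum_{k\geq 0}\hat X^{k}\operatorname{span}\{Q_b\}$.

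The second gap is that you defer precisely the content of the theorem. The decomposition of a nilpotent operator, skew-adjoint with respect to a \emph{possibly degenerate} Hermitian form, into self-paired and hyperbolically paired chains with anti-diagonal pairing and constants satisfying $A_j=-\overline{B}_{m-j-1}$ is not a black box one can cite here; for nondegenerate forms it is classical, but degeneracy is exactly where the difficulty sits, and your two ``points requiring care'' are the crux rather than loose ends. Concretely: the radical $R$ of the form is $\hat X$-invariant (immediate from skew-adjointness), so operator and form descend to $V/R$, where the nondegenerate classification applies; but chains lifted back from $V/R$ satisfy $\hat X U_j=A_jU_{j+1}$ only modulo $R$, and an invariant subspace of a nilpotent operator need not admit an invariant complement (already $\hat Xe_1=e_2$, $\hat Xe_2=0$, $R=\langle e_2\rangle$ shows that naive lifting fails). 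Ruling out such configurations, or correcting the lifts, again requires the weighted grading --- homogeneity together with the positivity of $\nu$ is what kills the obstruction --- so the needed statement is a graded structure theorem specific to this situation, not an off-the-shelf one. Until the nilpotency step is re-founded on the equation and this structure-and-lifting theorem is actually proved, what you have is a correct plan in the spirit of \cite{KM16}, not a proof.
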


\section{Exotic symmetries and rotations}\label{sec:gc+rot}
In this section, 
we first prove that $\g_c$ is incompatible with a nilpotent symmetry.
Then a careful analysis of monomial $X$-pairs yields to Theorems \ref{gc+real} and \ref{gc+imag} that characterize existence of rotations.

\begin{theorem}\label{nonil}
    If a holomorphically nondegenerate model \eqref{model} possesses a nilpotent rotation, then there are no exotic symmetries, i.e. $\g_c=0$.
\end{theorem}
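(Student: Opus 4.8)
The plan is to argue by contradiction: assume \eqref{model} carries a nonzero nilpotent rotation $N$ and, simultaneously, a nonzero exotic symmetry $X\in\g_c$, and derive a contradiction.

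First I would normalize $N$. A nilpotent rotation is a nonzero nilpotent linear (rigid, weight $0$) field, and since $\operatorname{weight}(\z{j}\dz{i})=\mu_j-\mu_i$, a weight-$0$ field can contain an off-diagonal term $\z2\dz1$ only when $\mu_1=\mu_2$. Hence the presence of $N$ forces $\mu_1=\mu_2=:\mu$; every linear change of the tangential variables then preserves the weights, and since a nonzero nilpotent endomorphism of $\bbC^2$ is a single Jordan block, I may assume $N=\z2\dz1$. Equal weights also make $P$ homogeneous of total degree $d=1/\mu\geq 2$ in $(\z1,\z2,\bz1,\bz2)$.

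The key reduction comes from integrating $N$. The associated real vector field $N+\overline N$ integrates to the flow $\tau_t\colon(\z1,\z2)\mapsto(\z1+t\z2,\z2)$ (with $w$ fixed), which preserves $M_P$; consequently $P$ is $\tau_t$-invariant, i.e.
\begin{equation*}
P(\z1+t\z2,\z2,\bz1+t\bz2,\bz2)=P(\z1,\z2,\bz1,\bz2)\qquad\text{for all }t.
\end{equation*}
The ring of polynomial invariants of this additive action is generated by $\z2$, $\bz2$ and $J:=\z1\bz2-\bz1\z2$, so that $P=\Phi(\z2,\bz2,J)$ for a polynomial $\Phi$; note that $J$ is purely imaginary. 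Holomorphic nondegeneracy rules out $P$ being independent of $\z1$, so $\Phi$ genuinely depends on $J$: writing $P=\sum_k J^k Q_k(\z2,\bz2)$, the top $J$-degree part $J^{K}Q_K$ has $K\geq 1$ and $Q_K\neq 0$.

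Finally I would confront this form with the exotic symmetry. Since $\dim\g_c\leq 1$ \cite{KM16}, we have $\g_c=\langle X\rangle$; because $N$ and $X$ both commute with $W=\dw$, the Jacobi identity gives $[N,X]\in\g_c$, and as $\operatorname{ad}_N$ is nilpotent on the finite-dimensional space $\g_c$ it must vanish there, so $[N,X]=0$. By \autoref{chainsum} the existence of $X$ forces $P$ to split into the chain form $\sum_j\Re\big(\sum_k U^j_k\overline{V^j_{m_j-k}}\big)$ built from $X$-pairs of chains, in which each summand is a product of one holomorphic and one antiholomorphic factor whose bidegrees are rigidly correlated along the chain. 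The main obstacle—and the technical heart of the argument—is to show that no such nonzero chain decomposition can reproduce the antisymmetric factor $J=\z1\bz2-\bz1\z2$ forced into $P=\Phi(\z2,\bz2,J)$: a monomial-by-monomial comparison (using $[N,X]=0$ so that $N$ acts compatibly on each chain, together with the reality relation $A_j=-\overline{B_{m-j-1}}$ of \autoref{xpair}) should show that the alternating signs produced by expanding $J^{K}$ are incompatible with the fixed coefficients $A_j,B_j$ of a single chain, forcing $Q_K=0$ and hence $K=0$. This contradicts the genuine $J$-dependence established above, so $\g_c=0$.
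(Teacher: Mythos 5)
Your opening moves are sound, and one of them is actually cleaner than the paper's: the normalization $N=\z2\dz1$ with $\mu_1=\mu_2$ matches the paper's setup (which takes $Y=\z1\dz2$), and your observation that $\operatorname{ad}_N$ acts nilpotently on polynomial vector fields, so that $[N,X]\in\g_c=\langle X\rangle$ forces $[N,X]=0$, disposes in one stroke of what the paper must treat as a separate case (its Case II, $[X,Y]=kX$ with $k\neq0$). The invariant-theoretic description $P=\Phi(\z2,\bz2,J)$ is also correct. But there is a genuine gap exactly where you place the ``technical heart'': the claim that no chain decomposition can reproduce a polynomial with genuine $J$-dependence is asserted (``should show''), not proved, and the mechanism you propose --- an incompatibility between the alternating signs in the expansion of $J^K$ and the chain constants $A_j,B_j$ --- is unlikely to work as stated. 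The constants of an $X$-pair can be renormalized almost at will (this is Lemma~\ref{constantmultiple} and Remark~\ref{normalizationofchains}), and chain sums built with the reality constraint $A_j=-\overline B_{m-j-1}$ routinely produce antisymmetric, $\Im$-type combinations; sign bookkeeping in the output polynomial is not where the obstruction lives, so completing your argument along these lines would require an idea you have not supplied.

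What is missing is the step that makes the theorem tractable, and which your own reduction hands to you: use $[N,X]=0$ to determine $X$ explicitly. Writing $X=f\dz1+g\dz2$ with $f,g$ weighted homogeneous and solving $[N,X]=0$ gives $g=b\z2^{m}$ and $f=b\z1\z2^{m-1}+c\z2^{m}$. If $b=0$, then $X=c\z2^{m-1}N$, and the paper's holomorphic-nondegeneracy computation ($\Im(NP)\not\equiv0$, while $\Re(NP)\equiv 0$) shows such a multiple of $N$ is never a nontrivial exotic symmetry. If $b\neq0$, then after rescaling $X=\z2^{m-1}\left((\z1+c\z2)\dz1+\z2\dz2\right)$, and the linear factor $L=(\z1+c\z2)\dz1+\z2\dz2$ annihilates no nonconstant polynomial: for a homogeneous $U$ of degree $n>0$, inspecting the term of top degree in $\z1$ shows it reappears in $L(U)$ with coefficient multiplied by $n\neq 0$ and cannot cancel. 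Hence $X$ admits no $X$-pairs of chains at all, contradicting Theorem~\ref{chainsum}. Note that in this route the contradiction comes from the fields compatible with $N$ being unable to serve as exotic symmetries of \emph{any} model --- the shape of $P$, and hence your invariant-theory detour, is never needed. This is precisely the paper's proof (with the roles of $\z1$ and $\z2$ interchanged); without it, or a genuinely completed version of your sign argument, your proposal remains a plan rather than a proof.
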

\begin{proof}
    Let $Y$ be a nilpotent rotation. We can choose multitype coordinates such that $Y=\z1\dz2$. It follows that $\mu_1=\mu_2$. In these coordinates the exotic symmetry $X\in\g_c$ of weight $\nu>0$ is of the form 
    \begin{equation*}
        X = f\dz1+g\dz2, 
    \end{equation*}
    where $f,g$ are holomorphic weighted homogeneous polynomials of degree $\nu+\mu_1$. Since in $\bbC^3$ $\dim \g_c \leq 1$ it follows, that $[X,Y]=kX$ for some $k\in\bbR$. Hence 
    \begin{equation*}
    [X,Y] = -\z1f_{\z2}\dz1+(f-\z1g_{\z2})\dz2=kX.
    \end{equation*}
    By collecting terms we obtain a system
    \begin{align}
        -\z1f_{\z2} &= kf, \label{fstnil}\\
        f-\z1g_{\z2} &= kg. \label{sndnil}
    \end{align}
    
    We consider two cases.
    
    \textbf{Case I}: $k=0$. It follows $f=a\z1^m$ and $g=a\z1^{m-1}\z2 + b\z1^m$, where $m\geq2$, $a,b\in\bbC$. If $a=0$, we get $X=b\z1^m\dz2$, but that would mean $X=b\z1^{m-1}Y$ which cannot be a symmetry. Indeed, if it were, then
    \begin{align*}
    0=\Re(XP)=\Re(b\z1^{m-1}YP) &= \Re(b\z1^{m-1})\Re(YP) - \Im(b\z1^{m-1})\Im(YP) \\
                                &= -\Im(b\z1^{m-1})\Im(YP),
    \end{align*}
    where $\Re(YP)=0$ since $Y$ is a symmetry and $\Im(YP)\neq 0$ from holomorphic nondegeneracy. Hence $b=0$ and $X=0$. If $a\neq 0$, we can change coordinates by re-scaling in $\z1$ in order to remove $a$ and obtain
    $$X = \z1^{m-1}(\z1\dz1+(\z2+c\z1)\dz2),$$
    for some $c\in\bbC$.
    This $X$ cannot be an exotic symmetry, since it does not annihilate any nonzero polynomial, i.e. it does not admit any $X$-pair. To see this, denote $L$ the linear part of $X$ in parentheses and $Q$ a polynomial in $\z1,\z2$ such that $L(Q)=0$. Focus on the term of the highest degree $\beta$ in $\z2$ and compute
    \begin{equation*}
    L(a_{\alpha\beta}\z1^\alpha\z2^\beta) = a_{\alpha\beta}(n\z1^\alpha\z2^\beta+c\beta\z1^{\alpha+1}\z2^{\beta-1}).
    \end{equation*}
    We notice the first term is of the same degree and cannot be eliminated by other terms of lower degree.
    
    \textbf{Case II}: $k\neq0$. We show the system does not have a nonzero solution. The only solution to \eqref{fstnil} is $f=0$, which can be again seen by focusing on the term of $f$ of the highest degree in $\z2$. This term appears on the right-hand side (because $k\neq 0$) but not on the left-hand side. Substituting $f=0$ into \eqref{sndnil} gives equation of the same form, hence $f=g=0$ is the only solution.
\end{proof}

The existence of rotation has a strong consequences for the form of the exotic symmetry.
\begin{lemma}\label{diagrots}
Suppose that the holomorphically nondegenerate model \eqref{model} admits 
a rotation 
$Y\in\g_0$  together with  an exotic symmetry $X\in\g_c.$
 Then there are coordinates in which $Y$ is diagonal and $X$ monomial. More precisely, there are coordinates such that
\begin{gather}\label{diagrots_fields}
    Y = \lambda_1\z1\dz1+\lambda_2\z2\dz2
    \\ \label{field:monomial_exotic}
    X = i\z1^{\alpha}\z2^{\beta}(q\z1\dz1-p\z2\dz2),\ \text{or}\  X = i\z1^{\beta}\dz2,\ \text{or}\  X = i\z2^{\alpha}\dz1,
\end{gather}
where  $\lambda_1,\lambda_2\in\bbC$
and $p,q,\alpha,\beta$ are nonnegative integers.
\end{lemma}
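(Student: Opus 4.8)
The plan is to exploit the near-uniqueness of the exotic symmetry together with the linear action of the rotation. First I would record the two structural reductions already at hand. Since $\g_c\neq0$, \autoref{nonil} rules out a nilpotent rotation, so by \autoref{lem:4.2} (and the simultaneous-diagonalization lemma preceding it) all of $\g_0$ is diagonalizable; I choose multitype coordinates in which every rotation, in particular the given $Y$, is diagonal, $Y=\lambda_1\z1\dz1+\lambda_2\z2\dz2$ with $\lambda_1,\lambda_2\in\bbC$. As in the proof of \autoref{nonil}, the exotic symmetry is rigid with no $\dw$-component, $X=f\dz1+g\dz2$, where $f,g$ are holomorphic and weighted homogeneous of weights $\nu+\mu_1$ and $\nu+\mu_2$, with $\nu\in(0,1)$ the weight of $X$. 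Because $X$ and $Y$ are rigid of weights $\nu$ and $0$, the bracket $[X,Y]$ is rigid of weight $\nu$ and hence lies in $\g_c=\langle X\rangle$; thus $[X,Y]=\kappa X$ for some scalar $\kappa$.

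The heart of the argument is then a direct bracket computation. Writing out $[X,Y]=\kappa X$ and collecting the $\dz1$- and $\dz2$-components gives the decoupled eigenvalue equations
\begin{align*}
Y(f)=(\lambda_1-\kappa)f,\qquad Y(g)=(\lambda_2-\kappa)g,
\end{align*}
so that $f$ and $g$ are eigen-polynomials of the diagonal operator $Y$. Each monomial $\z1^{a}\z2^{b}$ occurring in $f$ then satisfies the two linear relations $\lambda_1a+\lambda_2b=\lambda_1-\kappa$ and $\mu_1a+\mu_2b=\nu+\mu_1$, and similarly for $g$. When $(\lambda_1,\lambda_2)$ is not proportional to the multitype vector $(\mu_1,\mu_2)$ these relations are independent, so the exponents are uniquely determined and $f,g$ are forced to be single monomials; subtracting the relations for $f$ and for $g$ pins down the exponent shift and yields $f=c\,\z1^{\alpha+1}\z2^{\beta}$, $g=d\,\z1^{\alpha}\z2^{\beta+1}$ (with $c$ or $d$ possibly zero), so that $X=\z1^{\alpha}\z2^{\beta}(c\,\z1\dz1+d\,\z2\dz2)$.

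It remains to identify the normal form and normalize the coefficients. By \autoref{chainsum} the field $X$ must admit an $X$-pair, hence annihilate a nonzero polynomial; since $X(Q)=\z1^\alpha\z2^\beta(c\,\z1Q_{\z1}+d\,\z2Q_{\z2})$, this is possible only if the linear part has a monomial $\z1^p\z2^q$ in its kernel, i.e. $cp+dq=0$. When both $c,d\neq0$ this forces $c:d=q:-p$ with $p,q$ coprime positive integers and gives the first form; when exactly one coefficient vanishes one gets $X=f\dz1$ or $X=g\dz2$, and reading off the surviving monomial yields either the first form with $p=0$ or $q=0$, or—when that coefficient is a pure power of a single variable—the shift fields $i\z2^\alpha\dz1$ and $i\z1^\beta\dz2$. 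Finally the reality of the model, the condition $\Re(XP)=0$, equivalently the conjugation relation $A_j=-\overline B_{m-j-1}$ built into Definition \ref{xpair}, fixes the phase of the overall constant to be purely imaginary, which after a real rescaling is the asserted factor $i$; I would verify this phase computation directly on the monomials of $P$, exactly as it plays out for $\Im w=\Re(\bz1\z2^\alpha)$.

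The main obstacle is the degenerate case excluded above, when $(\lambda_1,\lambda_2)$ is proportional to $(\mu_1,\mu_2)$. A real rotation can never be proportional to $\mu_1\z1\dz1+\mu_2\z2\dz2$, since that field reproduces $P$ and so violates \autoref{real_rot}; hence this can occur only for an imaginary rotation equal to a multiple of $i(\mu_1\z1\dz1+\mu_2\z2\dz2)$, which by \autoref{imag_rot} forces $P$ to be balanced. In that situation the eigenvalue equations are implied by homogeneity and carry no information, so the monomiality of $f,g$ cannot be read off from the rotation alone. The hard part will be to fall back on the full $X$-pair decomposition of \autoref{chainsum} together with holomorphic nondegeneracy—arguing as in \autoref{nonil}, where a field of the shape $(\text{monomial})\cdot(\text{symmetry})$ was ruled out—and to use the residual weight-preserving coordinate freedom to reduce $X$ to a single monomial; showing that this reduction is always possible, or that a balanced configuration carrying no non-Euler rotation cannot support an exotic symmetry at all, is the delicate point on which the proof turns.
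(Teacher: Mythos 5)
Your proposal follows the same route as the paper's proof (diagonalize $Y$ via \autoref{nonil}, impose the bracket relation, solve the exponent systems to force monomiality, then determine the coefficients from the kernel condition $X(U)=0$), but it has a genuine gap at exactly the point you flag as ``delicate'': the degenerate case $(\lambda_1,\lambda_2)=c(\mu_1,\mu_2)$, $c\in\bbC^*$. You leave this case open, offering only a plan (fall back on \autoref{chainsum}, or show the balanced configuration supports no exotic symmetry), and the reason you cannot close it is that your write-up never uses the one ingredient the paper's argument turns on: the bracket coefficient is \emph{real}. Since $\g_c$ is a one-dimensional \emph{real} subspace of the real Lie algebra $\g$, the relation $[Y,X]\in\g_c$ means $[Y,X]=kX$ with $k\in\bbR$, not ``some scalar $\kappa$'' as you write. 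With this, the degenerate case dies in one line: by your own observation $\Re c=0$ (a real rotation cannot be a multiple of the radial field $R=\mu_1\z1\dz1+\mu_2\z2\dz2$, by Euler's identity for the weight-one polynomial $P$), so $Y=ic'R$ with $c'\in\bbR^*$; but Euler's identity applied to the weighted homogeneous coefficients of $X$ (of weights $\nu+\mu_1$, $\nu+\mu_2$) gives $[Y,X]=ic'\nu X$, so $k=ic'\nu$ would be purely imaginary and nonzero --- a contradiction. Equivalently, in the paper's notation, equation \eqref{fst} becomes $ic'\nu=k$, which has no solution, forcing $f_1=f_2=0$ and $X=0$. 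So your eigenvalue equations in the balanced case are not ``implied by homogeneity and carry no information''; once $k$ is known to be real they are \emph{inconsistent} with homogeneity, and the case simply cannot occur. This is precisely how the paper handles the imaginary-rotation subcase ($\lambda_j\in i\bbR$ forces $k=0$, and then $\nu\neq 0$ makes the two rows independent).

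A secondary inaccuracy: your final normalization claims that $\Re(XP)=0$ (via the chain constants of Definition \ref{xpair}) forces the constant $\theta$ in $X=\theta\z1^\alpha\z2^\beta(q\z1\dz1-p\z2\dz2)$ to be purely imaginary ``after a real rescaling''. That is false as stated: the phase of $\theta$ is coordinate-dependent. Rescaling $\z1\mapsto c\z1$ with $c\in\bbC^*$ keeps $Y$ diagonal, keeps the coordinates multitype coordinates, and replaces $\theta$ by $c^{\alpha}\theta$, so models exist in which the exotic symmetry's coefficient has any prescribed phase; no real rescaling of the field or of the variables can repair this. What the lemma asserts --- and what the paper does --- is a \emph{complex} rescaling of one variable to arrange $\theta=i$, which is legitimate because the statement only claims existence of suitable coordinates.
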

\begin{remark}
    The case when $X=i\z1^{\beta}\dz2$ can be understood as the two term form of $X$ with $\alpha=-1$, holomorphicity of $X$ then implies $p=0$. Similarly the remaining field corresponds to $\beta=-1$.
\end{remark}
\begin{proof}
By \autoref{nonil} $Y$ is diagonalizable, hence we can choose some multitype coordinates such that $Y=\lambda_1 z_1\dz1+\lambda_2 z_2\dz2$, where  $\lambda_1, \lambda_2 \in \bbC$. Denote $0<\nu<1$ the weight of $X$.
In these coordinates $X=f_1(z) \dz1 + f_2(z) \dz2$, where $f_j$'s are weighted homogeneous holomorphic polynomials of weighted degree $\nu+\mu_j$. The Lie bracket $[Y,X]$ is a symmetry of weight $\nu>0$. 
Since $\dim\g_c = 1$, the only possibility is 
\begin{equation}\label{lie_condition}
    [Y,X]=kX
\end{equation}
for some $k\in \bbR$. We will prove that $f_j$'s are monomials. Let us investigate a general monomial term $z_1^\gamma z_2^\beta$ in $f_1$ and find constraints on $\gamma,\beta$. Computing the Lie bracket gives us
\begin{equation*}
[Y,z_1^\gamma z_2^\beta\dz1] = \left(\lambda_1 \gamma z_1^\gamma z_2^\beta + \lambda_2 \beta z_1^\gamma z_2^\beta - \lambda_1 \gamma z_1^\gamma z_2^\beta\right)\dz1 = 
\left(\lambda_1(\gamma-1)+\lambda_2\beta\right)\z1^\gamma\z2^\beta\dz1.
\end{equation*}
This means that possible terms in $X$ do not interfere with each other when taking the bracket with $Y$. The Lie bracket condition \eqref{lie_condition} leads to a linear equation
\begin{align}
    \lambda_1 (\gamma-1) +\lambda_2 \beta &= k,\label{fst}\\
    \mu_1(\gamma-1) + \mu_2\beta &= \nu,
\end{align}
where the second relationship comes from the homogeneity of $X$.

The system is regular. Indeed, write $Y=Y^{\Re}+Y^{\Im}$. If $Y^{\Re}\neq0$, its coefficient vector $(\Re{\lambda_1}, \Re{\lambda_2})$ is independent of $(\mu_1, \mu_2)$, since the Euler field reproduces polynomials. If $Y=Y^{\Im}$ is an imaginary rotation, i.e. $\lambda_1,\lambda_2\in i\bbR$, then $k=0$, because $k$ is real. Since $\nu\neq 0$, it follows that, again, the rows are independent. In both cases there is a unique solution $\gamma, \beta$ and therefore $f_1 = \sigma_1 z_1^\gamma z_2^\beta$ for some~$\sigma_1\in\bbC$.

Similarly one obtains $f_2(\z1,\z2) = \sigma_2 z_1^\alpha z_2^\delta$ satisfying 
\begin{align}
    \lambda_1 \alpha +\lambda_2 (\delta-1) &= k \label{snd},\\
    \mu_1\alpha + \mu_2(\delta - 1) &= \nu.
\end{align}
Combining these two systems we get
\begin{align}
    \lambda_1 (\gamma - 1 - \alpha) +\lambda_2 (\beta - \delta + 1) &= 0,\\
    \mu_1 (\gamma - 1 - \alpha) +\mu_2 (\beta - \delta + 1) &= 0.
\end{align}
Again by uniqueness $\gamma = \alpha + 1$, $\beta+1=\delta$. Thus the symmetry $X$ is of the form
\begin{equation}
    X=f_1 \dz1 + f_2 \dz2 = z_1^\alpha z_2^\beta(\sigma_1 z_1 \dz1 + \sigma_2 z_2 \dz2).
\end{equation}

It is worth noting that the case $\gamma = 0$ (or $\delta=0$) forces $\sigma_2=0$ (or $\sigma_1=0$), yielding a vector field $X$ of the form
\begin{align*}
    X=\sigma\z2^\beta\dz1 \quad \text{or}\quad X=\sigma\z1^\alpha\dz2.
\end{align*}
Now we determine $\sigma_1,\sigma_2$. Since $X$ is an exotic symmetry, it annihilates the last polynomial of any $X$-pair of chains. Write $L=\sigma_1 z_1 \dz1 + \sigma_2 z_2 \dz2$ and $L(U)=0$ for some weighted homogeneous polynomial $U$. The vector field $L$ reproduces monomials in the following sense 
$L(\z1^p\z2^q)=(\sigma_1p+\sigma_2q)\z1^p\z2^q$, hence $\sigma_1p+\sigma_2q=0$ for each monomial in $U$, since they cannot cancel each other in $X(U)$. Also the space of weighted homogeneous solutions of $X(U)=0$ of given weight is at most $1$-dimensional, implying $U$ is a monomial. Additionally $(\sigma_1, \sigma_2)=\theta(q,-p)$ for some $\theta \in \bbC$. It follows $X=\theta z_1^\alpha z_2^\beta(q z_1 \dz1 - p z_2 \dz2)$. By re-scaling in one of the variables, we arrive at the convenient form
\begin{equation*}\label{monomialfield}
    X=iz_1^\alpha z_2^\beta(q z_1 \dz1 - p z_2 \dz2).\qedhere
\end{equation*}
\end{proof}

From now on, we  will work in the coordinates given by Lemma \eqref {diagrots}. 
The proof of the Lemma offers useful and crucial parametrization collected in  the following 
\begin{corollary}
For a monomial vector field \eqref{field:monomial_exotic} there exists a monomial solution~$Q$ to $X(Q)=0$ of minimal (weighted) degree. Any other solution to $X(U)=0$ is of the form $U=cQ^K$, where $K$ is nonnegative integer and $c\in\bbC$. In the following, whenever we reference the vector field \eqref{field:monomial_exotic}, we denote $Q$ this minimal solution $Q=\z1^p\z2^q$. Note that $p,q$ are coprime.

\end{corollary}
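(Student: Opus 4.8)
The plan is to reduce the equation $X(U)=0$ to an elementary divisibility condition on exponents. First I would factor the vector field from \eqref{field:monomial_exotic} as $X = i\z1^\alpha\z2^\beta L$, where $L = q\z1\dz1 - p\z2\dz2$ is the diagonal linear part already isolated in the proof of \autoref{diagrots}. Since $\z1^\alpha\z2^\beta$ is a nonzero monomial, multiplication by it has no zero divisors in the polynomial ring, so for every polynomial $U$ one has $X(U)=0$ if and only if $L(U)=0$. The two degenerate forms $X=i\z1^\beta\dz2$ and $X=i\z2^\alpha\dz1$ I would handle directly: there $X(U)=0$ simply says that $U$ is independent of $\z2$ (resp.\ $\z1$), which is the same analysis with $q=0$ (resp.\ $p=0$).

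Next I would use the diagonal action recorded in the lemma, $L(\z1^a\z2^b)=(qa-pb)\z1^a\z2^b$. Writing $U=\sum_{a,b}c_{ab}\z1^a\z2^b$ and invoking linear independence of monomials, $L(U)=0$ forces $c_{ab}=0$ whenever $qa\neq pb$; hence $\ker L$ is spanned by the monomials $\z1^a\z2^b$ satisfying $qa=pb$. The arithmetic heart of the argument is to solve this in $\bbZnon$. Normalizing $p,q$ to be coprime --- which is precisely what makes $Q=\z1^p\z2^q$ minimal, since a common factor $d>1$ would yield the smaller solution $\z1^{p/d}\z2^{q/d}$ --- coprimality gives $p\mid a$ and $q\mid b$, so $(a,b)=(Kp,Kq)$ for some $K\in\bbZnon$. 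Thus the monomial solutions are exactly the powers $Q^K$, and $Q$ (the case $K=1$) is the minimal nonconstant one.

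Finally I would pass from monomial solutions to the stated uniqueness. Each $Q^K$ is weighted homogeneous of weight $K(p\mu_1+q\mu_2)$, and since the model has finite multitype we have $\mu_1,\mu_2>0$ and $(p,q)\neq(0,0)$, so $Q$ has strictly positive weight and distinct values of $K$ produce distinct weights. Consequently a weighted homogeneous solution of a given weight can combine only a single power, giving $U=cQ^K$ with $c\in\bbC$, exactly as claimed. The only mildly delicate points are the coprimality normalization and the bookkeeping in the two degenerate cases; I do not expect either to be a genuine obstacle, the whole statement resting on the elementary observation that $L$ acts diagonally on monomials.
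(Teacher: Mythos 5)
Your proposal is correct and takes essentially the same route as the paper, which does not prove the corollary separately but reads it off from the end of the proof of Lemma \ref{diagrots}: there too the key point is that $L=q\z1\dz1-p\z2\dz2$ acts diagonally on monomials, so the monomials of $U$ cannot cancel in $X(U)$, each exponent pair must satisfy $qa-pb=0$, and a weighted homogeneous solution is therefore a single power of the minimal monomial $Q=\z1^p\z2^q$. Your additions---factoring off the prefactor $i\z1^\alpha\z2^\beta$ via the integral-domain property and spelling out the coprimality/divisibility step giving $(a,b)=K(p,q)$---are exactly the details the paper leaves implicit, so this is a faithful, slightly more explicit version of the same argument.
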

We have to analyze every possible hypersurface that arises from \autoref{chainsum} applied to the exotic symmetry $X$ of the form \eqref{field:monomial_exotic}. The first step is to describe all possible $X$-pairs of chains.

\begin{definition}
    Let $X$ be a monomial vector field of the form \eqref{field:monomial_exotic}, $Q$ its minimal solution and denote $T=\z1^\alpha\z2^\beta$. Additionally, let $K,N,m$ be nonnegative integers, $K$ positive, such that $Kp>m\alpha$ and $Kq>m\beta$. We define a \emph{pure} $X$-pair to be a monomial $X$-pair given by
    \begin{equation*}
    U_{m-j} = \frac{1}{j!}\frac{Q^K}{T^j} = \frac{1}{j!}\z1^{Kp-j\alpha}\z2^{Kq-j\beta}
    \end{equation*}
    and
    \begin{equation*}
    V_{m-j} = U_{m-j}Q^N = \frac{1}{j!}\frac{Q^K Q^N}{T^j} = \frac{1}{j!}\z1^{Kp-j\alpha}\z2^{Kq-j\beta}\z1^{Np}\z2^{Nq}
    \end{equation*}
    for $j=0,1,\cdots,m$.
\end{definition}
\noindent
Note that the pair just defined in indeed an $X$-pair. It satisfies
    \begin{align*}
            &X(U_m) = 0,                       & &X\left(V_m\right) = 0, \\
    &X(U_{m-j}) = i\left(p\beta-q\alpha\right)U_{m-j+1}, & &X(V_{m-j}) = i\left(p\beta-q\alpha\right)V_{m-j+1}.
    \end{align*}
    Here the number $i\left(p\beta-q\alpha\right)$ acts as all the constants $A_j,B_j$. The utility of pure $X$-pairs is a normalization; each monomial $X$-pair differs from a pure $X$-pair only by coefficients.
It is useful to explicitly compute the \emph{chain sum} as in \eqref{chainsumeq} for any pure $X$-pair. The corresponding model is
\begin{multline}\label{monomialsum}
    \Im w  = \Re\left(\sum_{j=0}^{m} U_j\overline{V}_{m-j} \right)= \\
    =\Re\left(\sum_{j=0}^{m} \frac{1}{j!(m-j)!}\z1^{Kp-j\alpha}\z2^{Kq-j\beta}\bz1^{Kp-(m-j)\alpha}\bz2^{Kq-(m-j)\beta}\bz1^{Np}\bz2^{Nq}\right)  =\\
    =\frac{2^{m}}{m!}\Re\left(\left(\z1^p\z2^q\right)^N\right)|\z1|^{Kp-m\alpha}|\z2|^{Kq-m\beta}\left(\Re\left(\z1^\alpha\z2^\beta\right)\right)^m. 
\end{multline}

Requiring this model to be holomorphically nondegerate forces $(p\beta-q\alpha)\neq 0$. This is an important observation. In other words $(\alpha, \beta)$ is independent of $(p,q)$. 
Indeed, if they were dependent, the right-hand side would be a function of $\z1^p\z2^q$ and $Z=q\z1\dz1-p\z2\dz2$ would be a complex tangent field.

The following lemma is also useful for analyzing all the possible $X$-pairs of chains, where $X$ is not necessarily monomial. It computes how the corresponding chain sum of an $X$-pair changes if the pair is multiplied by constants.
\begin{lemma}\label{constantmultiple}
    Let $\{U_0,\dots,U_m\}$ and $\{V_0, \dots,V_m\}$ be an $X$-pair of chains. Let $c_j, d_j$ be nonzero complex numbers such that sequences $U'_j := c_j U_j$ and $V'_j := d_j V_j$ also form an $X$-pair of chains. Then there exists a nonzero complex number $\tau$ such that the corresponding chain sum of $\{U'_0,\dots,U'_m\}$ and $\{V'_0,\dots,V'_m\}$ amounts to
    \begin{equation}
        \Re\left(\sum_{j=0}^m U_j'\overline{V'}_{m-j}\right)=
        \Re\left(\tau\sum_{j=0}^m U_j\overline{V}_{m-j}\right).
    \end{equation}
\end{lemma}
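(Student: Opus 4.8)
The plan is to track how the structure constants $A_j,B_j$ of the two chains transform under the rescalings $U_j\mapsto c_jU_j$ and $V_j\mapsto d_jV_j$, and then to exploit the compatibility relation $A_j=-\overline{B}_{m-j-1}$ to extract a rigid constraint on the $c_j$ and $d_j$.

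First I would compute the new structure constants. Since $X$ is a derivation and the $c_j$ are constants, $X(U'_j)=c_jX(U_j)=c_jA_jU_{j+1}=(c_j/c_{j+1})A_jU'_{j+1}$, so the rescaled chain has constants $A'_j=(c_j/c_{j+1})A_j$, and likewise $B'_j=(d_j/d_{j+1})B_j$; the top relations $X(U'_m)=X(V'_m)=0$ hold automatically. Here I use crucially that every $c_j$ and $d_j$ is nonzero, so these quotients make sense.

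Next I would impose that the primed sequences again form an $X$-pair, i.e. $A'_j=-\overline{B'}_{m-j-1}$ for $j=0,\dots,m-1$. Substituting the transformed constants and dividing by the original identity $A_j=-\overline{B}_{m-j-1}$ (permissible as $A_j\neq0$) gives
\[
\frac{c_j}{c_{j+1}}=\frac{\overline{d}_{m-j-1}}{\overline{d}_{m-j}},\qquad j=0,\dots,m-1,
\]
equivalently $c_j\overline{d}_{m-j}=c_{j+1}\overline{d}_{m-j-1}$. This says that the quantity $\tau_j:=c_j\overline{d}_{m-j}$ satisfies $\tau_j=\tau_{j+1}$, hence is independent of $j$; call the common nonzero value $\tau$. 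Substituting into the chain sum then yields
\[
\sum_{j=0}^m U'_j\overline{V'}_{m-j}=\sum_{j=0}^m c_j\overline{d}_{m-j}\,U_j\overline{V}_{m-j}=\tau\sum_{j=0}^m U_j\overline{V}_{m-j},
\]
and taking real parts finishes the argument.

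The computation is entirely elementary; the single point carrying all the content — and the step I would flag as the crux — is the observation that the $X$-pair compatibility condition forces $c_j\overline{d}_{m-j}$ to be constant in $j$. That constancy is precisely what allows one scalar $\tau$ to be pulled out of the whole sum, rather than a separate factor for each term.
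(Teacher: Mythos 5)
Your proof is correct and follows essentially the same route as the paper's: both compute the transformed constants $A'_j=(c_j/c_{j+1})A_j$, $B'_j=(d_j/d_{j+1})B_j$, use the antihermitian condition $A'_j=-\overline{B'}_{m-j-1}$ to deduce $c_j\overline{d}_{m-j}=c_{j+1}\overline{d}_{m-j-1}$, and conclude that $\tau:=c_j\overline{d}_{m-j}$ is independent of $j$ and factors out of the chain sum. No gaps; your explicit remark that dividing by $A_j=-\overline{B}_{m-j-1}$ is legitimate because $A_j\neq 0$ is a point the paper leaves implicit.
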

\begin{proof}
    Denote the constants of both $X$-pairs by $A_j, B_j$ and $A'_j, B'_j$ as in the Definition \ref{xpair}. Applying the field gives
    \begin{displaymath}
        X(U'_j)=X(c_jU_j)=c_jA_jU_{j+1} = \frac{c_jA_j}{c_{j+1}}U'_{j+1}.
    \end{displaymath}
    An analogous computation for $V'_j$ gives us
    \begin{equation*}
        A'_j = \frac{c_j}{c_{j+1}}A_j, \ \ B'_j = \frac{d_j}{d_{j+1}}B_j,
    \end{equation*}
    for $j=0,\dots,m-1$.
    The antihermitian condition $A'_j=-\overline{B}'_{m-j-1}$ translates to
     $c_j\overline{d}_{m-j}=c_{j+1}\overline{d}_{m-j-1}$. By induction, this expression does not depend on the index $j$ and is therefore a constant. This is the number $\tau$, since it is precisely the coefficient appearing in the corresponding chain sum. Indeed
     \begin{displaymath}
         U_j'\overline{V'}_{m-j} = c_j\overline{d}_{m-j}U_j\overline{V}_{m-j} = \tau U_j\overline{V}_{m-j}.\qedhere
     \end{displaymath}
\end{proof}
\begin{remark}\label{normalizationofchains}
    The previous lemma offers an opportunity to simplify \autoref{xpair} of an $X$-pair of chains. Indeed, we can always find appropriate $c_j$ and $d_j$ such that the constants for the new $X$-pair are all equal to any nonzero imaginary number $A'_j=B'_j=ic\in i\bbR^*$. However, the corresponding chain sum can change. But that is easy to fix -  divide one of the chains by $\tau$.
\end{remark}

The following lemma deals with a general (nonmonomial) $X$-pair for our monomial vector field of the form \eqref{field:monomial_exotic}. It turns out they can be thought of as a combination of monomial pairs.
\begin{lemma}\label{decomposition}
    Let $\{U_0,\dots,U_m\}$ and $\{V_0, \dots,V_m\}$ be a nonmonomial $X$-pair of chains, where $X$ is a monomial exotic symmetry given by \eqref{field:monomial_exotic}. Then there exist several monomial $X$-pairs of chains $\{U_0^t,\dots,U_{m_t}^t\}$ and $\{V_0^t,\dots,V_{m_t}^t\}$ such that
    \begin{equation}\label{sumdecomposition}
        \Re\left(\sum_{j=0}^m U_j\overline{V_{m-j}}\right) = 
        \sum_t \Re\left(\sum_{j=0}^{m_t} U_j^t\overline{V_{m_t-j}^t}\right).
    \end{equation}
	Moreover, we get an explicit description of each monomial \ref{coolA}.
\end{lemma}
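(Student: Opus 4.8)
The plan is to decompose both chains into individual monomials and track them through the two distinguished directions attached to $X$: the shift direction $(\alpha,\beta)$ coming from the coefficient $\z1^\alpha\z2^\beta$, and the kernel direction $(p,q)$ of $Q$, which are independent by the observation following \eqref{monomialsum}. Writing $X$ in the monomial form \eqref{field:monomial_exotic}, the action on a monomial translates its exponent lattice point by the fixed vector $(\alpha,\beta)$ and rescales it by a linear functional that vanishes precisely on the kernel direction of $Q$; the three cases in \eqref{field:monomial_exotic} are handled uniformly by this picture. First I would group the monomials occurring anywhere in the chain into \emph{threads}, the equivalence classes of exponent points modulo $\bbZ(\alpha,\beta)$. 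Since every monomial of a fixed $U_j$ lies on the weight line $\mu_1 a+\mu_2 b=\deg_\Lambda U_j$, which is not parallel to $(\alpha,\beta)$ (because the weight $\nu=\mu_1\alpha+\mu_2\beta$ of $X$ is positive), each thread meets that line in at most one lattice point. Hence the thread component $U_j^t$ is a single monomial, and $U_j=\sum_t U_j^t$.

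Next I would show that each thread, restricted to the chain, is itself a monomial $X$-chain. Because $X$ preserves threads, the relation $X(U_j)=A_jU_{j+1}$ splits as $X(U_j^t)=A_jU_{j+1}^t$ for every $t$, with the \emph{same} constants $A_j$. Using that $A_j\neq0$ together with the preceding corollary (the kernel of $X$ in each weight is one-dimensional, spanned by a power of $Q$), a short forward/backward propagation argument shows that the support of thread $t$ is an interval $\{0,\dots,e_t\}$ and that its top element $U_{e_t}^t$ is a kernel monomial, i.e. a multiple of some $Q^{K_t}$; the unique full-length thread ends at $U_m$. I would run the identical analysis on $\{V_j\}$, obtaining thread components $V_j^s$ with kernel tops at indices $f_s$.

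I would then expand the chain sum bilinearly, $\sum_{j} U_j\overline{V_{m-j}}=\sum_{t,s}W_{t,s}$ with $W_{t,s}=\sum_j U_j^t\overline{V_{m-j}^s}$, and check that this is an honest partition: the holomorphic exponent of a term determines the pair $(t,j)$ and the antiholomorphic exponent determines $(s,m-j)$, so distinct $(t,s)$ contribute disjoint monomials. For fixed $(t,s)$ the surviving range is $j\in[m-f_s,\,e_t]$, nonempty exactly when $e_t+f_s\geq m$; re-indexing by $i=j-(m-f_s)$ produces sequences $\tilde U_i:=U^t_{(m-f_s)+i}$ and $\tilde V_k:=V^s_{(m-e_t)+k}$ of common length $L+1=e_t+f_s-m+1$, with $\tilde U_L=U^t_{e_t}$ and $\tilde V_L=V^s_{f_s}$ the kernel monomials, so that $W_{t,s}=\sum_{i=0}^L\tilde U_i\overline{\tilde V_{L-i}}$.

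The crux, and the part demanding the most care, is to verify that $(\{\tilde U_i\},\{\tilde V_k\})$ is genuinely an $X$-pair in the sense of \autoref{xpair}, i.e. that the antihermitian condition survives the trimming and re-indexing rather than leaving a spurious constraint. This is a direct substitution: with $\tilde A_i=A_{(m-f_s)+i}$ and $\tilde B_k=B_{(m-e_t)+k}$, feeding them into the original relation $A_j=-\overline{B}_{m-j-1}$ reproduces exactly $\tilde A_i=-\overline{\tilde B}_{L-i-1}$, so the condition is inherited automatically. Summing over $(t,s)$ and taking real parts yields \eqref{sumdecomposition}. For the final assertion I would observe that each monomial sub-chain, ending in a power of $Q$ and with intermediate terms forced by successive division by $T=\z1^\alpha\z2^\beta$, agrees up to constants with a pure $X$-pair; \autoref{constantmultiple} then rewrites each $W_{t,s}$ in the closed form \eqref{monomialsum}, giving the explicit description. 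The main obstacle throughout is the bookkeeping of the second and third steps: establishing the interval/kernel-top structure of threads and matching lengths so that the trimmed fragments are bona fide $X$-pairs, not merely formal pieces of the sum.
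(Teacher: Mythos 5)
Your argument is correct and is essentially the paper's own proof: your ``threads'' are precisely the paper's monomial pieces $A_j^k$ of \eqref{coolA}, indexed by the terminal kernel monomial $Q^{S_k}$ they end at, and your final bilinear distribution and trimming step (with the range $j\in[m-f_s,e_t]$ and sub-chains of length $e_t+f_s-m+1$) is exactly the paper's regrouping $\sum_{k,l}\sum_{j=m-l}^{k}A_j^k\overline{B_{m-j}^l}$. The only organizational difference is that the paper applies Remark~\ref{normalizationofchains} \emph{first}, making all chain constants equal to the purely imaginary $i(p\beta-q\alpha)$, so the recurrence solves explicitly (producing the factorials in \eqref{coolA}) and the antihermitian condition for the sub-pairs holds automatically, whereas you keep general constants, verify that condition by re-indexing, and invoke Lemma~\ref{constantmultiple} only at the end to reach the explicit form \eqref{monomialsum}.
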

\begin{proof}
    By the previous Remark, we can assume 
    \begin{equation}\label{reccurence}
        X(U_{j})=i(p\beta-q\alpha)U_{j+1}\quad \text{for}\quad j=0,\dots,m-1.
    \end{equation}
    Denote $U_m = a_mQ^{S_m}$ for some positive integer $S_m$ and $a_m\in\bbC^*$. Denote again $T=\z1^\alpha\z2^\beta$.
    Using the fact that $X$ is monomial, we solve this recurrence relation as
    \begin{equation}\label{coolA}
        U_j =  \sum_{k=j}^m A_j^k,\quad \text{ where }\quad 
        A_j^k=\frac{a_k}{(k-j)!}\frac{Q^{S_k}}{T^{k-j}}.
    \end{equation}
    Here $S_j\in\bbZ^+$ and $a_j\in\bbC$. The polynomial $U_j$ has to be weighted-homogeneous, hence there are conditions fixing $S_j$, but we do not need them explicitly. In a case where $S_j$ is not a positive integer, we do not obtain a polynomial. This situation is conveniently included in $a_j=0$. 
    
     The sequences of monomials $\{A_0^k,\dots,A_k^k\}$ satisfy
     \begin{equation*}
         X(A_k^k) = 0 \quad \text{and}\quad 
         X(A_j^k)=i(p\beta-q\alpha)A_{j+1}^k
         \ \text{for}\ j=0,\dots,k-1.
     \end{equation*}
     These are almost chains, but the polynomials are all zero if $a_k=0$. 
     Analogously we identify monomials $B_j^l$ such that
     \begin{equation}\label{coolB}
        V_{m-j} =  \sum_{l=m-j}^m B_{m-j}^l,\quad \text{ where }\quad 
        B_{m-j}^l=\frac{b_l}{(l+j-m)!}\frac{Q^{R_l}}{T^{l+j-m}}.
    \end{equation}
    After introducing this notation we can substitute sums \eqref{coolA} and \eqref{coolB} into \eqref{sumdecomposition} and easily distribute.
    \begin{align*}
        \sum_{j=0}^m U_j\overline{V_{m-j}} &= 
        \sum_{j=0}^m \left(\sum_{k=j}^m A_j^k\right)\left( \sum_{l=m-j}^m \overline{B_{m-j}^l}\right) = \\
        &= \sum_{k,l} \left( \sum_{j=m-l}^k A_j^k \overline{B_{m-j}^l}\right).
    \end{align*}
    Naturally, the inner sums could be zero. Either the sequences are zero themselves or $k+l<m$. Terms in nonzero sums are precisely the $X$-pairs we are after. They are of length $k+l+1-m$ and there are less than $(m+1)^2$ of them.
\end{proof}

Finally, using \eqref{monomialsum} and Lemmas \ref{constantmultiple} and \ref{decomposition} we obtain the following Theorem, which gives  a description of a  model  admitting a  monomial exotic symmetry.

\begin{theorem}\label{tutu}
 Assume that the holomorphically nondegenerate model $\eqref{model}$ possesses a monomial  exotic symmetry $X$ given by \eqref{field:monomial_exotic}. Then it  can be expressed as
\begin{equation}\label{mod:monomial_gc}
    \Im w = \sum_{N,K,m} \Re\left(\tau_{N,K,m}\left(\z1^p\z2^q\right)^N\right)|\z1|^{2k}|\z2|^{2l}\left(\Re\z1^\alpha\z2^\beta\right)^m,
\end{equation}
where $N,K,m, k,l$ are nonnegative integers and $k=Kp-m\alpha$, $l=Kq-m\beta$. The coefficients $\tau_{N,K,m}$ are some nonzero complex numbers. The polynomial has a weighted degree $1$. The special case of $\alpha = -1$ is then of the form
\begin{equation}\label{mod:monomial_gc2}
    \Im w = \sum_{N,K,m}
    \Re\left(\tau_{N,K,m}\z2^N\right)|\z2|^{2(K-m\beta)}\left(\Re\bz1\z2^\beta\right)^m.
\end{equation}
For $\beta=-1,$ the variables are interchanged. 
\end{theorem}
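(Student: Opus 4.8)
The plan is to assemble the three structural results already in hand. Since $M_P$ carries the nontrivial exotic symmetry $X\in\g_c$, \autoref{chainsum} will decompose $P=\sum_j T_j$ with each $T_j$ the chain sum of an $X$-pair of chains; it then suffices to bring every such chain sum into the shape of a single summand of \eqref{mod:monomial_gc} and to collect like terms.

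First I would reduce an arbitrary $X$-pair to monomial and then to pure pairs. Applying \autoref{decomposition} to each $T_j$ should rewrite its chain sum as a finite sum of chain sums of \emph{monomial} $X$-pairs, whose extremal terms, by the recurrence solutions \eqref{coolA} and \eqref{coolB}, are powers $U_m=a_mQ^{K}$ and $V_m=b_mQ^{K+N}$ of the minimal solution $Q=\z1^p\z2^q$; the data $(N,K)$ together with the length $m$ of the pair pick out a pure $X$-pair. I would then invoke Remark~\ref{normalizationofchains} to rescale both chains so that all of their constants become $i(p\beta-q\alpha)$ --- nonzero by the holomorphic nondegeneracy observation following \eqref{monomialsum} --- matching the pure pair exactly. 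By \autoref{constantmultiple} the chain sum of the monomial pair then equals $\Re\!\left(\tau\sum_j U_j\overline{V}_{m-j}\right)$ for the associated pure pair and some $\tau\in\bbC^*$.

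The last step would be to substitute the explicit pure chain sum \eqref{monomialsum}. Before taking real parts one has $\sum_j U_j\overline V_{m-j}=\tfrac{2^m}{m!}\,\overline{(\z1^p\z2^q)^N}\,|\z1|^{2k}|\z2|^{2l}\bigl(\Re\z1^\alpha\z2^\beta\bigr)^m$ with $k=Kp-m\alpha$ and $l=Kq-m\beta$. As the factor $|\z1|^{2k}|\z2|^{2l}(\Re\z1^\alpha\z2^\beta)^m$ is real, $\tau$ passes through $\Re$ onto the holomorphic factor, and with $\Re(\tau\overline\zeta)=\Re(\overline\tau\zeta)$ and the absorbed $2^m/m!$ this contribution becomes
\[
\Re\bigl(\tau_{N,K,m}(\z1^p\z2^q)^N\bigr)\,|\z1|^{2k}|\z2|^{2l}\bigl(\Re\z1^\alpha\z2^\beta\bigr)^m .
\]
Summing over all the monomial pairs and grouping equal triples $(N,K,m)$ --- discarding any whose coefficient cancels --- would give \eqref{mod:monomial_gc}, with weight-one homogeneity inherited from $P$. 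For the degenerate fields I would read $\alpha=-1$ as $p=0$, $q=1$, $Q=\z2$, so that $|\z1|^{2m}$ cancels the $|\z1|^{-2m}$ concealed in $(\Re\z1^{-1}\z2^\beta)^m$ to produce $(\Re\bz1\z2^\beta)^m$ and hence \eqref{mod:monomial_gc2}; the case $\beta=-1$ is identical with $\z1$ and $\z2$ interchanged.

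The one place I expect genuine care rather than routine computation is the matching in the second step: one must verify that \autoref{decomposition} really outputs pairs whose extremal terms are powers of $Q$, so that $N$ and $K$ are honest nonnegative integers, and that the rescaling of Remark~\ref{normalizationofchains} can be carried out on the $U$- and $V$-chains \emph{simultaneously} without breaking the antihermitian relation $A_j=-\overline B_{m-j-1}$ of Definition~\ref{xpair}. The inductive identity in the proof of \autoref{constantmultiple} is exactly what makes this consistent, so I expect the obstacle to be bookkeeping rather than substance.
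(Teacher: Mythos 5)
Your proposal is correct and follows exactly the route the paper intends: the paper gives no written proof of \autoref{tutu} beyond the sentence that it follows from \eqref{monomialsum}, \autoref{constantmultiple} and \autoref{decomposition}, and your argument is precisely that assembly (starting from \autoref{chainsum}, reducing to monomial pairs, normalizing to pure pairs, and substituting the explicit pure chain sum), with the degenerate cases $\alpha=-1$, $\beta=-1$ handled as in the remark after \autoref{diagrots}. As a minor bonus, your displayed computation carries the correct exponents $|\z1|^{2k}|\z2|^{2l}$ with $k=Kp-m\alpha$, $l=Kq-m\beta$, consistent with the theorem statement, whereas \eqref{monomialsum} as printed drops the factor $2$ in those exponents.
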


Theorem \ref{tutu} will be a key point when the model admits a (diagonal) rotation, the reason being that under this additional assumption, there exist multitype coordinates for which the exotic symmetry is, in fact, monomial.
\subsection{ Exotic symmetry together with real rotations}
In this subsection we analyze the situation where our model \eqref{model} admits both $\g_c$ and $\gRe$. 
After a possible change of multitype coordinates we can assume the real rotation to be diagonal; $Y=\lambda_1\z1\dz1+\lambda_2\z2\dz2$, where $\lambda_1,\lambda_2\in\bbR$. By the Theorem $\ref{diagrots}$ the exotic symmetry $X$ is monomial in these coordinates and given by \eqref{field:monomial_exotic}. The results of previous section apply and the model is again equal to \eqref{mod:monomial_gc}.

The additional existence of the rotation $Y\in \gRe$ imposes some new constraints on the parameters $N,k,l,m$ in the sum \eqref{mod:monomial_gc}. Let us explore them.

Using \autoref{real_rot}, $Y$ is a symmetry of \eqref{model} if and only if for each monomial $\z1^{a_1}\z2^{b_1}\bz1^{a_2}\bz2^{b_2}$ in $P$ we have 
\begin{equation*}
\lambda_1(a_1+a_2)+\lambda_2(b_1+b_2)=0.
\end{equation*}
Additionally, the condition of weighted-homogeneity is
\begin{equation*}
\mu_1(a_1+a_2)+\mu_2(b_1+b_2)=1.
\end{equation*}

We have already seen this is a regular system in the proof of Lemma \ref{diagrots}, hence it has a unique solution. In other words, the total degrees in variables $\z1,\bz1 $ and $\z2, \bz2$ are constant across all monomials. Denote these degrees $k_1$ and $k_2$, they satisfy the following system in $K,N$ and $m$.
\begin{align*}
    2pK + pN - \alpha m = k_1, \\
    2qK + qN - \beta m = k_2.
\end{align*}
From the kernel of this system we observe that $m$ and $2K+N$ are actually fixed. Indeed
$$
\left(\begin{array}{ccc}
   2p  & p & -\alpha \\
   2q  & q & -\beta 
\end{array}\right) \sim
\left(\begin{array}{ccc}
   2  & 1 & 0 \\
   0 & 0 & 1 
\end{array}\right)
$$
where we used only $q\alpha-p\beta\neq 0$.

This discussion proves most of the assertions in the following characterization.
\begin{theorem}\label{gc+real}
Let $M_P$ be a holomorphically nondegenerate  model  given by \eqref{model} with $\g_c\neq 0$ and $\gRe\neq 0 $. Then there exist positive integers $C$ and $m$ such that the model is locally equivalent to
\begin{equation}
    \Im w = \sum_{2K+N = C} \Re\left(\tau_{K,N}\left(\z1^p\z2^q\right)^N\right)|\z1|^{2(Kp-m\alpha)}|\z2|^{2(Kq-m\beta)}\left(\Re\z1^\alpha\z2^\beta\right)^m,
\label{mod:gcre}
\end{equation}
where $\tau_{K,N}$ are nonzero complex numbers; the sum is taken over some nonnegative integer solutions to $2K+N = C$. Moreover, if $\g_1 = 0$, then at least one $N$ is nonzero. In these coordinates the real rotation is given by 
\begin{equation} \label{real_rotation}
    Y = (Cq-m\beta)\z1\dz1-(Cp-m\alpha)\z2\dz2.
\end{equation}
\end{theorem}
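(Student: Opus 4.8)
The plan is to build directly on the discussion that precedes the statement. In the coordinates of \autoref{diagrots} the rotation $Y$ is diagonal and $X$ is the monomial field \eqref{field:monomial_exotic}, so by \autoref{tutu} the model already has the shape \eqref{mod:monomial_gc} with exponents $k = Kp - m\alpha$ and $l = Kq - m\beta$. The kernel computation carried out just above the theorem shows that, across all surviving terms, both $m$ and the combination $2K+N$ are constant; I would set $C := 2K+N$. It then remains to (i) check that $C$ and $m$ are positive, (ii) derive the displayed formula \eqref{real_rotation} for $Y$, and (iii) prove the \emph{moreover} assertion linking the vanishing of every $N$ to a nontrivial $\g_1$.

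For (i) I would argue by holomorphic nondegeneracy. If $m = 0$, every term is a polynomial in $\z1^p\z2^q$ alone, so $Z = q\z1\dz1 - p\z2\dz2$ annihilates $P$ and is a complex tangent field, which is exactly the contradiction isolated right after \eqref{monomialsum}; hence $m > 0$. Given $m \geq 1$, and since $X$ is exotic (so its weight $\alpha\mu_1 + \beta\mu_2$ is positive, forcing at least one of $\alpha,\beta$ to be positive), a short nonnegativity argument on $k$ and $l$ using the independence $p\beta - q\alpha \neq 0$ forces $K \geq 1$, so $C \geq 2 > 0$. For (ii), the constancy established above says that the total degrees in $(\z1,\bz1)$ and $(\z2,\bz2)$ equal the fixed values $k_1 = pC - \alpha m$ and $k_2 = qC - \beta m$; the symmetry criterion of \autoref{real_rot} then collapses to the single linear equation $\lambda_1 k_1 + \lambda_2 k_2 = 0$, and its solution $(\lambda_1,\lambda_2) = (k_2,-k_1)$ is precisely \eqref{real_rotation}.

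Step (iii) is where I expect the real work, and I would prove the contrapositive: if every $N$ vanishes, then $\g_1 \neq 0$. With all $N = 0$ the constraint $2K + N = C$ pins $K = C/2$, collapsing the sum to a single term $c\,|\z1|^{2Kp}|\z2|^{2Kq}\bigl(\Re\z1^\alpha\z2^\beta\bigr)^m$. Expanding the binomial power produces monomials whose $\z$-degrees are $(Kp + j\alpha,\, Kq + j\beta)$ and whose $\bz$-degrees are $(Kp + (m-j)\alpha,\, Kq + (m-j)\beta)$ for $j = 0,\dots,m$. The delicate point is to satisfy condition (3) of \autoref{g1} \emph{simultaneously} for all these monomials and for both the holomorphic and antiholomorphic degrees: this reduces to solving $\lambda_1\alpha + \lambda_2\beta = 0$ together with $K(\lambda_1 p + \lambda_2 q) = 1$, a $2\times 2$ system whose determinant $K(\alpha q - \beta p)$ is nonzero. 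Its unique solution certifies that $P$ is balanced, hence $\g_1 \neq 0$ by \autoref{g1}, giving the contrapositive of the claim. The main obstacle throughout is bookkeeping the exponents consistently, so that the constancy of $k_1,k_2$ and the balancing system are read off correctly from the expanded chain sums; once the indexing is fixed, each of (i)--(iii) is a short linear-algebra argument.
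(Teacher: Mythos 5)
Your proposal is correct and follows essentially the same route as the paper: reduce to the monomial form \eqref{mod:monomial_gc} via \autoref{diagrots} and \autoref{tutu}, use the kernel computation to fix $m$ and $C=2K+N$, read off \eqref{real_rotation} from \autoref{real_rot}, and prove the \emph{moreover} clause by showing that when all $N$ vanish the single remaining term is balanced, hence $\g_1\neq 0$ by \autoref{g1}. The only differences are that you spell out details the paper leaves implicit -- the positivity of $m$ and $C$ via holomorphic nondegeneracy, and the explicit $2\times 2$ balancing system with determinant $K(\alpha q-\beta p)\neq 0$, which the paper covers by simply citing that the model \eqref{mod:gc6dim} is balanced.
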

\begin{proof}
The equation \eqref{mod:gcre} is precisely \eqref{mod:monomial_gc} with fixed $m$. The only part that remains to be argued is the condition $\g_1=0$. If the first factor $\Re\tau Q^N$ would not appear, then the model would be \ref{mod:gc6dim}. But such a  model admits a nontrivial $\g_1$, since it is balanced.

The formula for the rotation $Y$ is an application of the Lemma \ref{real_rot}. Indeed, the total degree in $\z1,\bz1$ is
\begin{equation*}
Np+2(Kp-m\alpha)+m\alpha = Cp - m\alpha,
\end{equation*}
and similarly for degree in $\z2,\bz2$.
\end{proof}

The following example confirms that the case described in the previous theorem does indeed happen.
\begin{example}
A hypersurface in $\bbC^3$ given by
        $$\Im w = \Re(\z1^3\z2)|\z1|^2\Re(\z1^2\z2)$$
    is holomorphically nondegenerate, admits $\g_c$ and $\gRe$, but $\g_1=0$. This example corresponds to parameters $p=3$, $q=1$, $\alpha=2$, $\beta=1$, $m=1$, $K=1$ and $N=1$
\end{example}

\subsection{Exotic symmetry together with imaginary rotations} 
Now we need to check what is forced by the existence of an imaginary rotation.
\begin{theorem}\label{gc+imag}
  Let $M_P$ be a holomorphically nondegenerate  model  given by \eqref{model} with $\g_c\neq 0$ and $\gIm\neq 0$. Then the model is locally equivalent to
    \begin{equation}
    \Im w = \sum_{(K,m)\in R} r_{K,m}|\z1|^{2(Kp-m\alpha)}|\z2|^{2(Kq-m\beta)}\left(\Re\z1^\alpha\z2^\beta\right)^m,
\label{mod:gcim}
\end{equation}
where $r_{K,M}$ are real numbers and 
\begin{equation*}
R = \left\{(K,m)\,|\,\mu_1(2Kp-m\alpha)+\mu_2(2Kq-m\beta)=1,\,K\in\mathbb{Z}^{>0},m\in\mathbb{Z}^{\geq0}\right\}.
\end{equation*}
Moreover, if $\g_1 = 0$, then the sum has at least two terms.
In these coordinates the imaginary rotation is given by 
\begin{equation}\label{imaginary_rotation}
    Y = i(\beta\z1\dz1-\alpha\z2\dz2).
\end{equation}
\end{theorem}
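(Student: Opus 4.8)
The plan is to mirror the structure of the proof of \autoref{gc+real}, substituting the imaginary rotation condition of \autoref{imag_rot} for the real one. I start from \autoref{tutu}, which already tells me that a model admitting the monomial exotic symmetry $X$ of \eqref{field:monomial_exotic} has the form \eqref{mod:monomial_gc}, a sum of terms indexed by $(N,K,m)$ with general monomial factor $\Re(\tau_{N,K,m}(\z1^p\z2^q)^N)|\z1|^{2(Kp-m\alpha)}|\z2|^{2(Kq-m\beta)}(\Re\z1^\alpha\z2^\beta)^m$. By \autoref{diagrots} I may assume the imaginary rotation $Y$ is simultaneously diagonal, and $X$ is monomial in the same coordinates, so these coordinates are the right ones to impose the symmetry condition.

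Next I would compute, for a single generic monomial $\z1^{a_1}\z2^{b_1}\bz1^{a_2}\bz2^{b_2}$ appearing in \eqref{mod:monomial_gc}, the exponent differences $a_1-a_2$ and $b_1-b_2$, and feed them into the imaginary-rotation criterion $\lambda_1(a_1-a_2)+\lambda_2(b_1-b_2)=0$ from \autoref{imag_rot}. The crucial difference from the real case is that the factor $\Re(\tau(\z1^p\z2^q)^N)$ contributes both a holomorphic part $(\z1^p\z2^q)^N$ (giving $+Np$, $+Nq$ to the \emph{signed} degrees) and an antiholomorphic part $(\bz1^p\bz2^q)^N$ (giving $-Np$, $-Nq$). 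So within a single summand the signed degrees are not constant — the real part splits it into two conjugate monomials. Computing the signed degree difference in $\z1,\bz1$ for the holomorphic piece gives $Np+\alpha m$ (the $|\z1|^2$ factor cancels) and for the antiholomorphic piece $-Np-\alpha m$; I expect the imaginary rotation condition to force the sign-symmetric combination, which is why the coefficients $r_{K,m}$ come out \emph{real} rather than complex, and why the holomorphic factor $N$ must vanish. This is the mechanism that collapses the index set from $(N,K,m)$ with $N\geq 0$ down to pairs $(K,m)$ with the reality constraint $r_{K,m}\in\bbR$, matching \eqref{mod:gcim}.

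I would then pin down the weight/homogeneity relation. Each surviving summand has weighted degree $1$, which with $k=Kp-m\alpha$, $l=Kq-m\beta$ gives $\mu_1(2Kp-m\alpha)+\mu_2(2Kq-m\beta)=1$ (the $|\z1|^2$ contributes $2Kp$ in total signed-plus-conjugate degree after the $N=0$ collapse, etc.), which is exactly the defining relation of the set $R$; I would double-check the bookkeeping of exponents here since it is the step most prone to an off-by-factor error. For the rotation formula \eqref{imaginary_rotation}, I would verify via \autoref{imag_rot} that $Y=i(\beta\z1\dz1-\alpha\z2\dz2)$ annihilates each surviving monomial: the factor $(\Re\z1^\alpha\z2^\beta)^m$ has signed degree difference $\alpha$ in $\z1$ and $\beta$ in $\z2$ on its holomorphic part, and $\lambda_1=\beta$, $\lambda_2=-\alpha$ give $\beta\cdot\alpha-\alpha\cdot\beta=0$, while the $|\z1|^2,|\z2|^2$ factors contribute zero signed degree; this confirms invariance.

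The last assertion, that $\g_1=0$ forces at least two terms, I would argue exactly as in \autoref{gc+real}: a single-term model is of the balanced form and hence admits a nontrivial $\g_1$ by \autoref{g1} and the balanced criterion (3) there, giving the required contrapositive. The main obstacle I anticipate is not any single deduction but the careful exponent bookkeeping when passing from the complex-coefficient sum \eqref{mod:monomial_gc} to the real-coefficient sum \eqref{mod:gcim}: I must argue that the imaginary rotation does not merely constrain but actually \emph{eliminates} the $N>0$ holomorphic factor, and that the resulting coefficients are genuinely real. The clean way to see this is that $iY$ with $Y$ as in \eqref{imaginary_rotation} acts on a monomial $(\z1^p\z2^q)^N|\z1|^{2k}|\z2|^{2l}(\cdots)^m$ by a scalar proportional to $N(p\beta-q\alpha)$, and since $p\beta-q\alpha\neq 0$ by the holomorphic nondegeneracy observation following \eqref{monomialsum}, invariance forces $N=0$; reality of $r_{K,m}$ then follows because with $N=0$ the summand $|\z1|^{2k}|\z2|^{2l}(\Re\z1^\alpha\z2^\beta)^m$ is already real, so $\Re(\tau_{0,K,m})$ absorbs into a single real constant.
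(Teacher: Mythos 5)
Your overall strategy mirrors the paper's (start from \autoref{tutu}, impose \autoref{imag_rot} on the monomials of \eqref{mod:monomial_gc}, use $p\beta-q\alpha\neq 0$), but the central deduction is not actually carried out, and this is a genuine gap. Write $Q=\z1^p\z2^q$, $T=\z1^\alpha\z2^\beta$. The two monomials you test are the fully holomorphic piece $Q^NT^m|\z1|^{2k}|\z2|^{2l}$ and the fully antiholomorphic piece $\overline{Q}^N\overline{T}^m|\z1|^{2k}|\z2|^{2l}$; these are complex conjugates of each other, so the two conditions they produce under \autoref{imag_rot} are one and the same linear equation on $(\lambda_1,\lambda_2)$ (one is $-1$ times the other). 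No constraint on $N$, and no formula for $Y$, follows from this pair; ``I expect the imaginary rotation condition to force the sign-symmetric combination'' is not a deduction. Your fallback ``clean way'' is circular: it computes that invariance under $Y=i(\beta\z1\dz1-\alpha\z2\dz2)$ forces $N(p\beta-q\alpha)=0$ --- a correct computation --- but it presupposes that the given rotation has the form \eqref{imaginary_rotation}, which is part of what must be proved. The hypothesis $\gIm\neq 0$ only supplies some $Y=i(\lambda_1\z1\dz1+\lambda_2\z2\dz2)$ with unknown $\lambda_1,\lambda_2\in\bbR$, and your later check that \eqref{imaginary_rotation} annihilates the surviving monomials proves only the converse direction (the final model admits that rotation), not that the given rotation is of that form.

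The missing idea is to test a \emph{non-conjugate} pair of monomials inside one summand: conjugate only the $Q^N$ factor while keeping the same holomorphic factor $\z1^{m\alpha}\z2^{m\beta}$ coming from $(\Re T)^m$. This is exactly what the paper does, applying \autoref{imag_rot} to
\begin{equation*}
Q^N|\z1|^{2k}|\z2|^{2l}\,\z1^{m\alpha}\z2^{m\beta}
\qaq
\overline{Q}^N|\z1|^{2k}|\z2|^{2l}\,\z1^{m\alpha}\z2^{m\beta},
\end{equation*}
which yields the two independent equations $\lambda_1(m\alpha\pm Np)+\lambda_2(m\beta\pm Nq)=0$. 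Choosing a summand with $m\geq 1$ (one must exist: if every summand had $m=0$, then $P$ would be a function of $Q,\overline{Q}$ alone and $q\z1\dz1-p\z2\dz2$ would be a complex tangent field, contradicting holomorphic nondegeneracy), the \emph{sum} of the two equations gives $\lambda_1\alpha+\lambda_2\beta=0$, i.e. $(\lambda_1,\lambda_2)\propto(\beta,-\alpha)$, which is \eqref{imaginary_rotation} up to a real multiple; the \emph{difference} then gives $N(\lambda_1 p+\lambda_2 q)=0$, and since $\lambda_1p+\lambda_2q\propto \beta p-\alpha q\neq 0$, every summand has $N=0$. With this step repaired, the rest of your write-up --- the homogeneity bookkeeping defining $R$, the reality of $r_{K,m}$ once $N=0$, and the $\g_1=0$ argument borrowed from \autoref{gc+real} --- is correct and agrees with the paper.
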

\begin{proof}
    The index set $R$ is nothing but a description of the weighted-homogeneity of the model. 
     Denote the imaginary rotation in its diagonal coordinates $Y=i(\lambda_1\z1\dz1+\lambda_2\z2\dz2)$. The exotic symmetry $X$ is again monomial given by \eqref{field:monomial_exotic}. We will use the characterization of imaginary rotations from Lemma \ref{imag_rot}. We apply the condition on two terms 
     \begin{align*}
         Q^N|\z1|^{2k}|\z2|^{2l}\z1^{m\alpha}\z2^{m\beta}\quad\text{and}\quad {\overline{Q}}^N|\z1|^{2k}|\z2|^{2l}\z1^{m\alpha}\z2^{m\beta}.
     \end{align*} 
     This gives us two equations 
     \begin{gather*}
         \lambda_1(m\alpha+Np) + \lambda_2(m\beta + Nq ) = 0 \\
         \lambda_1(m\alpha-Np) + \lambda_2(m\beta - Nq ) = 0.
     \end{gather*}
    Adding them reveals $\lambda_1\alpha + \lambda_2\beta = 0$, which in turn provides the formula for $Y$. Since $(\alpha, \beta)$ is independent of $(p,q)$ (otherwise the model would be holomorphically degenerate), the only possibility is $N=0$. The remaining constants we rewrite as $r=\Re(\tau)$.
    At least two terms are needed to have $\g_1=0$ as in the proof of \autoref{gc+real}.
\end{proof}
Using the \autoref{gc+real} and \autoref{gc+imag}, we can recover (a part of) the Lemma~3.4 from \cite{K21}
\begin{theorem} \label{gc+re+im}
If a model has $\g_c\neq0$ and $\dim \g_1 \neq 0$, it is equivalent to 
\begin{align}\label{mod:gc6dim}
    \Im w &= |\z1|^{2k}|\z2|^{2l}\left(\Re \z1^{\alpha}\z2^{\beta}\right)^m, 
\end{align}
where $k,l,m$ are are nonnegative integers, $m>0$ and $\alpha,\beta$ are integers such that $\alpha,\beta\geq-1$.
\end{theorem}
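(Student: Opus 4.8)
The plan is to reduce the problem to \autoref{gc+imag} and then use the balanced condition forced by $\g_1$ to collapse the resulting sum to a single term.

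First I would observe that $\dim\g_1\neq0$ makes the model balanced: by \autoref{g1} there are multitype coordinates and reals $\lambda_1,\lambda_2$ with a complex reproducing field $Y=\lambda_1\z1\dz1+\lambda_2\z2\dz2$, and by the remark following \autoref{g1} the field $iY$ is an imaginary rotation, so $\gIm\neq0$. The key point for coordinate bookkeeping is that this imaginary rotation is already diagonal, so the argument in the proof of \autoref{diagrots} shows the exotic symmetry $X\in\g_c$ is \emph{automatically} monomial in these same coordinates (the only further freedom being a rescaling of $\z1,\z2$, which preserves the balanced structure). Hence I may assume simultaneously that $X$ is the monomial field \eqref{field:monomial_exotic}, that condition (3) of \autoref{g1} holds, and that by \autoref{gc+imag} the model is \eqref{mod:gcim}, a sum over $(K,m)\in R$.

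Next I would impose the balanced condition term by term. Expanding $(\Re\z1^\alpha\z2^\beta)^m$, the holomorphic multidegrees occurring in the $(K,m)$-term are $(Kp-s\alpha,\,Kq-s\beta)$ for $s=0,\dots,m$. Feeding these into the relation $\lambda_1 a_1+\lambda_2 a_2=1$ and subtracting the $s=0$ equation from the $s=1$ equation (legitimate since $m>0$) yields $\lambda_1\alpha+\lambda_2\beta=0$ together with $K(\lambda_1 p+\lambda_2 q)=1$. Since $q\alpha-p\beta\neq0$ by holomorphic nondegeneracy (as recorded after \eqref{monomialsum}), the orthogonality $\lambda_1\alpha+\lambda_2\beta=0$ forces $\lambda_1 p+\lambda_2 q\neq0$; consequently $K=(\lambda_1 p+\lambda_2 q)^{-1}$ takes the same value $K_0$ for every term in the sum.

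It then remains to pin down $m$. Here I would use that the weight of $X$ equals $\nu=\mu_1\alpha+\mu_2\beta$ (from the weighted-homogeneity relations in the proof of \autoref{diagrots}), which is strictly positive since $\nu\in(0,1)$. The homogeneity constraint defining $R$, namely $2K_0(\mu_1 p+\mu_2 q)-m\nu=1$, therefore determines $m$ uniquely, so \eqref{mod:gcim} collapses to a single term; after rescaling $w$ to normalize the real coefficient this gives $\Im w=|\z1|^{2k}|\z2|^{2l}(\Re\z1^\alpha\z2^\beta)^m$ with $k=K_0 p-m\alpha\geq0$ and $l=K_0 q-m\beta\geq0$, while $m>0$ and $\alpha,\beta\geq-1$ follow from the monomial normalization and the $\alpha=-1,\ \beta=-1$ conventions of \autoref{tutu}. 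The only delicate step is the coordinate compatibility in the first paragraph, i.e.\ securing the balanced and monomial normalizations at once; this is exactly what the diagonal-rotation mechanism of \autoref{diagrots} provides, so the remainder is the elementary degree count above.
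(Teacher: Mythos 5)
Your proposal is correct and follows essentially the same route as the paper's (much terser) proof: invoke \autoref{g1} to produce a diagonal imaginary rotation in balanced coordinates, pass to the form \eqref{mod:gcim} via \autoref{gc+imag}, and then use the balanced condition to collapse the sum to a single term. Your second and third paragraphs simply make explicit the step the paper states as ``this model has to be balanced, hence there is only one summand'' (note only that the $s=1$ equation exists just for terms with $m>0$; this is harmless, since the $s=0$ equations already fix $K$ for every term, homogeneity with $\nu>0$ then fixes $m$, and holomorphic nondegeneracy rules out the single term having $m=0$).
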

\begin{proof}
    Thanks to \autoref{g1} we have an imaginary rotation that can be diagonalized and we can use the form \eqref{mod:gcim}. This model has to be balanced, hence there is only one summand and we arrive at \eqref{mod:gc6dim}. There is a real rotation given by \eqref{real_rotation}, therefore $\dim \g\geq 6$.
\end{proof}

If a model with nontrivial $\g_c$ has a real and imaginary rotation, we can assume they are both diagonal and apply Theorems \ref{gc+real} and \ref{gc+imag} simultaneously and derive the model \eqref{mod:gc6dim} again. Since all rotations can be diagonalized simultaneously, Theorems \ref{gc+real} and \ref{gc+imag} also say more than two rotations of the same type are not possible. This concludes the discussion of rotations together with $\g_c$.
In the next section we will determine which models from this section possess an additional symmetry.

\section{Exotic together with tubular symmetries}\label{sec:gc+gt}
In the following theorem, we classify models admitting  both  an  exotic symmetry and a tubular symmetry (i.e. symmetry of weight in the interval $(-1,0)$.
\begin{theorem}
\label{gc+tub}
    Any  Levi degenerate model \eqref{model} that is holomorphically nondegenerate and possesses both an exotic symmetry and  a tubular symmetry, i.e. $\g_c\neq0$ and $\g_t\neq0$, is locally equivalent to one of the following models
    \begin{align}
    \Im w &= \Re \bz1\z2^\alpha,                & \dim\g &= 10, \label{mod:zurich}\\
    \Im w &= \Re \bz1\z2^{2\alpha-1} + |\z2|^{2\alpha},   & \dim\g &= 6,\label{mod:gcgtgim}\\
    \Im w & =\Re \bz1\z2^\alpha + Q_1(\z2, \bz2), & \dim\g &= 5,\label{mod:zurichpert}\\
    \Im w &= \Re\z2^\alpha\Re\bz1\z2^\alpha,    & \dim\g &= 5,\label{mod:new_gt+gc}\\
    \Im w &= \Re\z2^\alpha\Re\bz1\z2^\alpha + Q_2(\z2, \bz2), & \dim\g &= 4, \label{mod:new_gt+gcpert}\\
    \Im w &= |\z1|^{2k}(\Re \z2)^m,             & \dim\g &= 7,\label{mod:kolar_gc_7dim}
    \end{align}
    where $\alpha, k, m$ are positive integers and $Q$'s are real-valued polynomials of weighted degree $1$ without pluriharmonic terms such that the corresponding models are not equivalent to those listed previously, e.g., $Q_1$ is not circular ($Q_1 \neq r|\z2|^m$).
\end{theorem}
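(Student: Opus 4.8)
The plan is to use the tubular symmetry first to constrain the dependence of $P$ on one variable, and only afterwards bring in the exotic symmetry. I would pass to multitype coordinates in which all rotations are linear and every tubular symmetry is regular. A weighted-homogeneous tubular symmetry $T$ has weight $-\mu_j\in(-1,0)$; weighing its coefficients shows that the coefficient of $\dz{j}$ is a nonzero constant, the coefficient of $\dw$ is a holomorphic polynomial $g$ in $z$ of weight $1-\mu_j\in(0,1)$ (hence free of $w$), and, when $\mu_1>\mu_2$, a weight $-\mu_1$ field has no $\dz2$ part. Thus, after relabeling and rescaling, the principal part of $T$ is a translation and I may write $T=\dz1+g(z)\dw$ (the case $\mu_1=\mu_2$ and the weight $-\mu_2$ case being analogous). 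Computing the tangency condition to \eqref{model} gives the identity $2\Re(P_{\z1})=\Im g$, and since $\Im g$ is pluriharmonic this says precisely that a real directional derivative of $P$ in the $\z1$-direction is pluriharmonic.

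The key consequence I would extract is a dichotomy. In the first case this forces $\z1$ to enter $P$ only to joint degree one: a genuinely mixed power $\z1^a\bz1^b$ with $a,b\geq 1$, or a pure power $\z1^a$ with $a\geq 2$ against a nonpluriharmonic coefficient, is incompatible with $\partial_{x_1}P$ (or $\partial_{y_1}P$) being pluriharmonic, since the top $\z1$-power cannot cancel. Hence $P=\Re(\bz1\,h(\z2,\bz2))+R(\z2,\bz2)$ with $\Re h$ pluriharmonic and $R$ real of weighted degree one. In the second case the tubular field lies in the lower variable and cannot be rotated to a real translation, so the constraint reads $\partial_{y_2}P\equiv 0$: then $P$ depends on $\z2$ only through $\Re\z2$, which is exactly the tube direction producing \eqref{mod:kolar_gc_7dim}.

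Next I would impose $\g_c\neq 0$ on each family. In the first family weighted homogeneity already forces the holomorphic part of $h$ to be the monomial $\z2^\alpha$, and solving $\Re(XP)=0$ directly (or, after checking $X$ may be taken monomial, invoking \autoref{tutu} and intersecting the shape \eqref{mod:monomial_gc} with the affine-in-$\z1$ constraint, which kills the $|\z1|^{2k}$ factor) leaves only finitely many configurations for $h$ and $R$. These organize into the principal model $\Re(\bz1\z2^\alpha)$ and its admissible perturbations by weight-one terms $Q(\z2,\bz2)$, together with the product form $\Re\z2^\alpha\,\Re(\bz1\z2^\alpha)$ arising from the imaginary-translation subcase; this is exactly \eqref{mod:zurich}, \eqref{mod:gcgtgim}, \eqref{mod:zurichpert}, \eqref{mod:new_gt+gc} and \eqref{mod:new_gt+gcpert}. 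Throughout I would use \autoref{nonil} to exclude nilpotent rotations, \autoref{gc+re+im} to split off the balanced model $|\z1|^{2k}|\z2|^{2l}(\Re\z1^\alpha\z2^\beta)^m$, and the exclusion of circular $Q_1$ to prevent overlaps between the families.

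Finally, for each surviving normal form I would assemble $\dim\g$ from its graded pieces $\g_{-1}=\langle W\rangle$, the tubular part, $\g_0=\gRe\oplus\gIm\oplus\langle E\rangle$, $\g_c$, $\g_1$ and $\g_n$, reading the rotations off Lemmas \ref{real_rot}--\ref{imag_rot}, the $\g_1$ contribution from \autoref{g1}, and the exceptional $\g_n$ of the top model $\Re(\bz1\z2^\alpha)$ from \eqref{models_with_gn}. I expect the main obstacle to be twofold: first, justifying that the exotic symmetry may be taken monomial in the rotation-free models (for instance \eqref{mod:zurichpert} admits neither a real nor an imaginary rotation, so \autoref{diagrots} does not apply and one must instead exploit the already reduced form of $P$); and second, the bookkeeping that separates the plain models from their $Q$-perturbations and verifies that adjoining $Q(\z2,\bz2)$ drops the dimension by exactly the claimed amount without reintroducing a tubular or rotational symmetry.
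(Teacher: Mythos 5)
Your strategy---normalize the tubular symmetry first, derive a structural dichotomy for $P$ from it alone, and only then impose the exotic symmetry---is genuinely different from the paper's proof (which runs a case analysis on the commutator $[T,X]$), but it breaks down at its first and crucial step. The claim that pluriharmonicity of $\partial_{x_1}P$ forces $\z1$ to enter $P$ only to joint degree one, ``since the top $\z1$-power cannot cancel,'' is false: writing $P=\sum c_{a,b}(\z2,\bz2)\,\z1^a\bz1^b$, the coefficient of $\z1^a\bz1^b$ in $\partial_{x_1}P=(\partial_{\z1}+\partial_{\bz1})P$ is $(a+1)c_{a+1,b}+(b+1)c_{a,b+1}$, so mixed terms coming from a genuinely mixed power can be cancelled by a pure power carrying a non-pluriharmonic coefficient. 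The paper's own model \eqref{mod:kolar_gc_7dim} already refutes the dichotomy: since its multitype weights are equal ($\mu_1=\mu_2=\tfrac{1}{2k+m}$, as noted in the Remark after the theorem), interchanging the variables and rotating $\z1\mapsto i\z1$ are legitimate changes of multitype coordinates, after which the model reads
\begin{equation*}
\Im w = |\z2|^{2k}\left(\Im\z1\right)^m,
\end{equation*}
still weighted homogeneous and free of pluriharmonic terms. Here $T=\dz1$ is a tubular symmetry whose principal part is exactly the real translation of your Case A (indeed $\partial_{x_1}P\equiv 0$), yet for $m\geq 2$ the variable $\z1$ enters to joint degree $m$, so $P$ is not of the form $\Re\bigl(\bz1 h(\z2,\bz2)\bigr)+R(\z2,\bz2)$; and the model does not satisfy the defining condition of your Case B either (``cannot be rotated to a real translation''), since $T$ \emph{is} a real translation. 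So the case division is neither correct nor exhaustive, and this is not a borderline pathology: it is a model satisfying every hypothesis of the theorem. The same cancellation phenomenon shows that in Case B the constraint is only that a directional derivative is pluriharmonic, not that $\partial_{y_2}P\equiv 0$; that strengthening is unjustified as stated.

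The failure is structural rather than repairable by bookkeeping: the tubular symmetry alone is too weak to control the $(\z1,\bz1)$-structure of $P$, which is precisely why the paper never attempts this reduction. Instead it splits according to $[T,X]$: if $[T,X]=0$, Lemma~\ref{lem:tub1} produces coordinates in which $X$ is the monomial $i\z2^\beta\dz1$ and $T=i\dz1+h\dw$, Theorem~\ref{tutu} then gives the chain-sum form of $P$, and the pluriharmonicity analysis of \eqref{factor_im}--\eqref{factor_im2} isolates \eqref{mod:zurich}, \eqref{mod:new_gt+gc} and their perturbations; if $[T,X]$ is a nonzero rotation, Lemma~\ref{diagrots} and Theorem~\ref{gc+real} yield exactly the tube model \eqref{mod:kolar_gc_7dim}; if $[T,X]$ is tubular, one iterates. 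Note that the very model that breaks your dichotomy is the one the paper can only reach through the rotation case of this trichotomy. Moreover, the two difficulties you yourself flag as ``the main obstacle''---making $X$ monomial in the absence of a rotation, and ``solving $\Re(XP)=0$ directly'' for an a priori non-monomial $X$ on your affine family---are not side issues but the actual content of that commutator analysis; leaving them open means that, even setting aside the false dichotomy, the proposal does not close.
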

\begin{remark}
    Structure of $\g$'s is collected in table \ref{tab:result_table}. It can be checked that the multitype weights are equal in all of the cases. Hypersurfaces \eqref{mod:zurich} and \eqref{mod:kolar_gc_7dim} have already been studied in \cite{KoMe11} and \cite{K21}.
\end{remark}
\noindent {\it Proof: }The proof is rather long and we split it into separate lemmas. In all of them we will use the following notation. Recall $\g_t=\gmu1\oplus\gmu2$, denote $T\in\g_t$ a tubular symmetry and $X\in\g_c$ an exotic symmetry. We distinguish several cases according to the commutator $[T,X]$.
First, we let $X$ and $T$ commute:
\begin{lemma}\label{lem:tub1}
    If $[T,X]=0$, then we can choose multitype coordinates such that
    \begin{align*}
        X=i\z2^{\beta}\dz1 &\qaq T = i\dz1 + h\dw, \ \text{or} \\
        X=i\z2^\alpha\dz1 &\qaq T = i\dz2 + h\dw
    \end{align*}
    for some positive integers $\alpha,\beta$ and weighted homogeneous polynomial $h$ of weight $1-\mu_j$.
\end{lemma}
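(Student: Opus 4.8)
The plan is to normalize the tubular field $T$, read off the algebraic content of $[T,X]=0$, and then use holomorphic nondegeneracy to discard the component of $X$ that is not aligned with $T$. For the normalization I would argue as follows. A weight count shows that a symmetry of weight $-\mu_j\in(-1,0)$ has all coefficients of weight $<1$, hence independent of $w$; so $T$ is rigid, of the shape $T=a_1\dz1+a_2\dz2+h\dw$ with $a_i$ weighted homogeneous of weight $\mu_i-\mu_j$ and $h$ of weight $1-\mu_j$. Regularity forces the constant part of $a_j$ to be nonzero, and a complex rescaling of $\z j$ normalizes it to $i$. Any surviving non-constant $z$-coefficient is a single weighted homogeneous monomial in the lighter variable, which can be absorbed by a weighted-homogeneous shear of the heavier variable (a legitimate change of multitype coordinates that leaves the $\dz j$-part of $T$ intact). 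This brings $T$ to $T=i\dz j+h\dw$.

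Recalling that an exotic symmetry carries no $\dw$-term, write $X=f_1\dz1+f_2\dz2$. A direct computation yields
\[
[T,X]=i(\dz{j}f_1)\dz1+i(\dz{j}f_2)\dz2-X(h)\dw ,
\]
so $[T,X]=0$ is equivalent to $f_1,f_2$ being independent of $\z j$ together with $X(h)=0$. Being weighted homogeneous in the single surviving variable, each $f_i$ is then a monomial.

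It remains to eliminate the component of $X$ transverse to $\dz j$. Take $j=1$, so that $X=f_1(\z2)\dz1+f_2(\z2)\dz2$; I claim $f_2=0$. If $f_2\neq0$, a weighted shear $\z1\mapsto\z1-c\,\z2^{\mu_1/\mu_2}$ (valid since $\mu_1/\mu_2\in\bbZpos$, and preserving $T=i\dz1$) can be tuned to annihilate the $\dz1$-coefficient, reducing $X$ to a pure power field $c'\z2^{e}\dz2$ with $e\geq1$. Such a field annihilates exactly $\bbC[\z1]$, and since every value of $\z2^{e}\dz2$ is divisible by $\z2$, it can never equal a nonzero multiple of a top lying in $\bbC[\z1]$; hence by \autoref{chainsum} all $X$-pairs have length one and $P\in\bbC[\z1,\bz1]$. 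Then $\dz2$ is tangent to $M_P$, contradicting holomorphic nondegeneracy. Thus $f_2=0$ and, after rescaling, $X=i\z2^\beta\dz1$, while $X(h)=0$ forces $h=h(\z2)$. The case $j=2$ is symmetric and produces $X=i\z1^\alpha\dz2$ with $T=i\dz2+h\dw$; the only new feature there is that, since $f_1,f_2$ have integer exponents, both can be nonzero only when $\mu_1=\mu_2$, a situation disposed of by the same degeneracy argument after a linear shear preserving $T$.

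The step I expect to be the main obstacle is this last elimination. Its heart is the observation that the unwanted component of $X$, once isolated by a $T$-preserving shear, is a pure monomial field in the differentiated variable, whereupon \autoref{chainsum} collapses every chain to length one and forces $P$ to lose one variable, i.e.\ holomorphic degeneracy. The equal-weight subcase $\mu_1=\mu_2$, in which both coefficients of $X$ can a priori survive the bracket relation, is the only place where the two directions fail to be interchangeable and must be handled separately using the extra linear freedom.
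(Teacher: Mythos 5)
Your proof is correct and follows essentially the same path as the paper's: normalize $T$ by a linear change plus the weighted shear, deduce from $[T,X]=0$ that the coefficients of $X$ are monomials in the un-differentiated variable, and eliminate the transverse component by shearing it into a pure power field and invoking \autoref{chainsum} --- such a field admits no $X$-pairs of chains of length greater than one, so $P$ would depend on a single variable pair, contradicting holomorphic nondegeneracy --- with the integer/weight count disposing of the case $T\in\gmu2$, exactly as in the paper. One remark: your conclusion $X=i\z1^\alpha\dz2$ in that second case agrees with the paper's own proof (its Case II), which shows that the second alternative as printed in the lemma statement, $X=i\z2^\alpha\dz1$ with $T=i\dz2+h\dw$, is a typo that your argument silently corrects.
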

\begin{proof}
We need to distinguish two cases depending on the weight of $T$.

\noindent \textbf{Case I:}
Weight of $T$ is $-\mu_1$, i.e. $T\in\gmu1$.
We will show $X=i\z2^\beta\dz1$.
In general such $T$ is of the form $T=a\dz1+b\dz2+\Tilde{h}\dw$, $a,b\in\bbC$ with $b=0$ if $\mu_1\neq \mu_2$; $\Tilde{h}$ is a polynomial of weighted degree $1-\mu_1$. We can change multitype coordinates so that 
\begin{equation*}
    T=i\dz1 + h\dw \qaq X=f\dz1 + g\dz2,
\end{equation*}
where $f,g,h$ are weighted-homogeneous polynomials. Computation of the commutator
\begin{displaymath}
    0 = [T,X] = if_{\z1}\dz1+ig_{\z1}\dz2-Xh\dw
\end{displaymath}
implies
    $X=\sigma_1 \z2^\alpha\dz1 + \sigma_2\z2^\beta\dz2$,
where $\sigma_1,\sigma_2$ are complex numbers and $\alpha\geq\beta$ (with equality if $\mu_1=\mu_2$). If $\sigma_2$ is nonzero, we can rewrite the field $X$ as 
\begin{align}\label{killing_sigma}
    X=\sigma_2\z2^\beta\left(\sigma\z2^n\dz1+\dz2\right),
\end{align}
where $\sigma=\frac{\sigma_1}{\sigma_2}$ and $n$ is a nonnegative integer. By a change of coordinates 
\begin{equation}\label{change_of_cords}
    \z1=\z1'+\frac{\sigma}{n+1}\z2^{n+1}, \quad \z2=\z2', \quad w = w'
\end{equation}
we transform the field to $X=\sigma_2\z2^\beta\dz2$. This field does not have any $X$-pairs of chains of length more than one. In such a situation the model \eqref{chainsum} would be holomorphically degenerate, which is a contradiction. Hence $\sigma_2=0$ and by re-scaling in the $\z2$ variable we arrive at
\begin{equation*}X=i\z2^\beta\dz1.\end{equation*}

\noindent\textbf{Case II:} 
Weight of $T$ is $-\mu_2$ and $\mu_1>\mu_2$. We claim that  we reach a similar conclusion $X=i\z1^\beta\dz2$. In any multitype coordinates the tubular symmetry $T$ is of the form
\begin{equation}\label{field:tubular-mu2}
T=a\z2^n\dz1+b\dz2+\Tilde{h}\dw.
\end{equation}
Using essentially the same change of coordinates as in \eqref{change_of_cords} we can assume $T=i\dz2+h\dw$.
From $[T,X]=0$ we similarly as before obtain $X=\sigma_1 \z1^\alpha\dz1 + \sigma_2\z1^\beta\dz2$, where $\alpha>\beta$. Although the form looks very similar to \eqref{killing_sigma}, we need to employ a different argument to obtain $\sigma_1=0$. Denote $\nu$ the weight of $X$, then
\begin{equation}
    \nu+\mu_1 = \alpha\mu_1 \quad \text{and} \quad \nu+\mu_2 = \beta\mu_1.
\end{equation}
This in turn implies $\mu_1-\mu_2 = (\alpha-\beta)\mu_1$, which is impossible since $\mu_2\neq 0$. Hence the $\dz1$ term does not appear and we have $X=i\z1^\beta\dz2$.
\end{proof}

The analysis in both cases given by the previous \autoref{lem:tub1} is analogous, so we focus only on the first one. Let us choose coordinates such that
\begin{equation*}
X=i\z2^{\beta}\dz1 \qaq T = i\dz1 + h\dw.
\end{equation*}

Sum of $X$-pairs of chains is given by \autoref{tutu}. In our specific case this takes the form of
\begin{equation}
    \Im w = \sum_{N,K,m} \Re\left(\tau_{N,K,m}\z2^N\right)|\z2|^{2l}\left(\Re\bz1\z2^\beta\right)^m.
\end{equation}
Recall that parameters $N,K,m$ are nonnegative integers, $l=K-m\beta$ and each summand is of weighted degree 1. In the following, we systematically investigate for which choices of parameters this model actually admits the tubular symmetry $T$. To this end, we use the following two lemmas, that are easy observations.
\begin{lemma}\label{lem:tub2}
    Let $S$ be any holomorphic vector field, denote $Z$ its "$\dz{}$-part", i.e.
\begin{equation*}
    S = f\dz1+g\dz2+h\dw = Z + h\dw.
\end{equation*}
The field $S$ is an infinitesimal symmetry of hypersurface $\Im w = P(z,\bz{})$ if and only if $\Re(ZP)$ is pluriharmonic. A real-valued polynomial is pluriharmonic if and only if it does not contain a mixed term (monomial consisting of both holomorphic and antiholomorphic variables).
\end{lemma}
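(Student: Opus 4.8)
The plan is to realize the symmetry condition as tangency of $\Re S$ to $M_P$ and then to read off pluriharmonicity from a decomposition into holomorphic, antiholomorphic and mixed monomials. First I would fix the defining function $\rho = \Im w - P = \tfrac{1}{2i}(w-\bar w) - P(z,\bar z)$ and recall that $S$ is an infinitesimal symmetry precisely when the real field $\Re S = \tfrac12(S+\bar S)$ is tangent to $M_P$, i.e. $(\Re S)\rho$ vanishes on $M_P$. Using $\dz{j}\rho = -P_{\z j}$, $\dw\rho = \tfrac{1}{2i}$ and the identity $\bar S\rho = \overline{S\rho}$ (valid since $\rho$ is real), a direct computation gives $S\rho = -ZP - \tfrac{i}{2}h$ and hence
\[
(S+\bar S)\rho = 2\Re(S\rho) = \Im h - 2\Re(ZP).
\]
Thus the whole lemma reduces to analyzing the scalar identity $\Im h = 2\Re(ZP)$ on $M_P$.

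For the reverse implication I would argue by explicit construction. If $\Re(ZP)$ is pluriharmonic then, by the second assertion, $\Re(ZP) = \Re G(z)$ for a holomorphic polynomial $G$ depending on $z$ only; choosing $h = 2iG(z)$ yields $\Im h = 2\Re G = 2\Re(ZP)$ \emph{identically}, so $(\Re S)\rho \equiv 0$ and $S = Z + 2iG\,\dw$ is a genuine symmetry. I would note that $G$ has the same weight as $ZP$, namely one more than the weight of $Z$, so this $h$ is admissible.

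For the forward implication I would exploit that $\Im h$ is the imaginary part of a holomorphic function. Writing $\Im h = \tfrac{1}{2i}(h - \bar h)$ with $h = h(z)$ a polynomial in $z$ alone, every monomial of $\Im h$ is either holomorphic (coming from $h$) or antiholomorphic (coming from $\bar h$), so $\Im h$ carries \emph{no mixed monomial}. Comparing the mixed parts of the two sides of $\Im h = 2\Re(ZP)$ then forces the mixed part of $\Re(ZP)$ to vanish, which is exactly pluriharmonicity. The second assertion of the lemma I would dispatch separately as a standard fact: a real polynomial is killed by every $\dz{j}\partial_{\bz{k}}$ iff no monomial carries both a holomorphic and an antiholomorphic factor, and reality then packages the surviving terms as a constant plus $2\Re$ of a holomorphic polynomial.

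The main obstacle will be the possible dependence of $h$ on $w$: if $h$ involved $w$, then restricting to $M_P$ through $w = \Re w + iP$ would feed the mixed polynomial $P$ into $\Im h$ and could in principle cancel mixed terms of $\Re(ZP)$ — indeed the bare statement fails for a field such as $S = w\,\dw$, for which $Z=0$ yet $\Re S$ is not tangent to $\Im w = |\z1|^2$. I expect to remove this difficulty through the weighted-homogeneous structure: the fields to which the lemma is applied are rigid, the exotic symmetries in $\g_c$ by definition commuting with $W=\dw$, and the tubular symmetries because their $\dw$-coefficient has weight $1-\mu_j < 1$ and so cannot contain $w$. In every such case $h = h(z)$, which is exactly what legitimizes treating $\Im h$ as free of mixed terms and makes the equivalence clean.
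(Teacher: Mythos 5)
Your proposal is correct, and there is nothing in the paper to compare it against: the authors state this lemma with no proof at all, introducing it as one of two ``easy observations.'' Your tangency computation --- $\rho=\Im w-P$, $S\rho=-ZP-\tfrac{i}{2}h$, hence $(S+\bar S)\rho=\Im h-2\Re(ZP)$, followed by comparison of mixed monomials --- is exactly the argument being left implicit, so your write-up supplies the missing proof rather than an alternative one. The most valuable part of your proposal is the caveat you raise at the end, and it is not a side remark but a genuine correction: as literally stated (``any holomorphic vector field'') the lemma is false, your counterexample $S=w\dw$ against $\Im w=|\z1|^2$ being accurate, and the statement only becomes true once $h$ is $w$-independent. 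In the paper's applications this is automatic, exactly as you say: by Lemma~\ref{lem:tub1} the relevant $h$ is weighted homogeneous of weight $1-\mu_j<1$, while $w$ has weight $1$, so no monomial of $h$ can contain $w$; this is also what legitimizes upgrading the identity ``on $M_P$'' to an identity in $(z,\bar z)$, since both sides are then $w$-free and $M_P$ projects onto all of $\bbC^2_z$. One further interpretive point you handle correctly but should state explicitly: with $h$ fixed, pluriharmonicity of $\Re(ZP)$ does \emph{not} make the given $S$ a symmetry (take $Z=0$, $h=\z1$); your backward implication constructs a new coefficient $h=2iG$, so what you actually prove is ``$Z+h\dw$ is a symmetry for \emph{some} $h$ if and only if $\Re(ZP)$ is pluriharmonic.'' That existential reading is precisely how the paper uses the lemma --- a surviving mixed term in $\Re(i\dz1 P)$ rules out every tubular symmetry with $\dz{}$-part $i\dz1$ --- so this is a sharpening of the statement, not a gap in your argument.
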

\begin{lemma}
    For any non-constant holomorphic $f$ and real-valued function $R$, the product $R\cdot\Im f$ is pluriharmonic if and only if
$R$ is a real constant or $R=r\Re f$ for some $r\in\bbR$. In the second case
\begin{equation*}
    r\Re f \cdot \Im f = \frac{r}{2}\Im f^2.
\end{equation*}
\end{lemma}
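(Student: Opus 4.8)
The plan is to treat the two implications separately, using throughout the criterion recalled in \autoref{lem:tub2}: a real-valued polynomial is pluriharmonic precisely when it contains no mixed monomial. The ``if'' direction is immediate. If $R$ is a real constant, then $R\Im f$ is a constant multiple of $\Im f=\tfrac{1}{2i}(f-\bar f)$, which has no mixed term. If $R=r\Re f$, the elementary identity
\[
\Re f\cdot\Im f=\frac{f+\bar f}{2}\cdot\frac{f-\bar f}{2i}=\frac{f^2-\bar f^2}{4i}=\frac12\Im f^2
\]
exhibits $R\Im f=\tfrac r2\Im f^2$ as a real multiple of the imaginary part of a holomorphic polynomial; this is pluriharmonic and simultaneously proves the displayed formula.

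For the converse I would start from $2i\,R\Im f=Rf-R\bar f$ and note that, since $R$ is real, $R\bar f=\overline{Rf}$, so the hypothesis is equivalent to $Rf-\overline{Rf}$ containing no mixed monomial. I would then decompose the real polynomial as $R=A+\bar A+B$, where $A$ gathers the purely holomorphic monomials of $R$, $\bar A$ the purely antiholomorphic ones, and $B=\bar B$ is the (real) mixed part. Since $f$ is holomorphic, $Af$ and its conjugate contribute no mixed terms, so extracting the mixed part of $Rf-\overline{Rf}$ and equating it to zero collapses to a single relation of the form
\[
B\,(f-\bar f)=A\bar f-\bar A f .
\]
All the content of pluriharmonicity is packed into this identity.

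The step I expect to be the main obstacle is to deduce $B=0$ from this relation; taken in isolation it admits spurious solutions, so the crux is to exploit weighted homogeneity. Working in a single weighted-homogeneous layer, $R$ and $f$ carry definite weighted degrees and every monomial has a well-defined weighted bidegree $(\deg_z,\deg_{\bar z})$. The right-hand side is supported in the two extreme bidegrees $(\deg R,\deg f)$ and $(\deg f,\deg R)$, whereas, $B$ being genuinely mixed, the summands $Bf$ and $B\bar f$ sit in strictly interior bidegrees; in the relevant case $\deg R=\deg f$ these can neither match the right-hand side nor cancel one another, which forces $B=0$ (the constant case corresponding to $\deg R=0$). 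Once $B=0$ the relation reads $A\bar f=\bar A f$, i.e.\ the meromorphic function $A/f=\bar A/\bar f$ is invariant under conjugation, hence real-valued and therefore a real constant $r/2$. This yields $A=\tfrac r2 f$ and $R=A+\bar A=r\Re f$, with the degenerate possibility $A\equiv 0$ returning the constant case and completing the argument.
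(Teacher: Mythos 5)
Your ``if'' direction is correct, and so is the reduction of the converse to the relation $B\,(f-\bar f)=A\bar f-\bar A f$ after writing $R=A+\bar A+B$. The fatal step is exactly the one you flagged as the main obstacle: you eliminate $B$ only ``in the relevant case $\deg R=\deg f$''. Nothing in the hypotheses restricts $\deg R$; the lemma quantifies over all real-valued $R$, so a proof must rule out every degree other than $0$ and $\deg f$, and declaring $\deg R=\deg f$ to be the relevant case silently assumes the shape of the conclusion. Concretely, your bidegree bookkeeping breaks down as soon as $\deg R>\deg f$: the monomials of $Bf$ have bidegree $(s+\deg f,\,\deg R-s)$ with $0<s<\deg R$, and at $s=\deg R-\deg f$ this equals exactly $(\deg R,\deg f)$, the bidegree of $A\bar f$. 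So $Bf$ \emph{can} match the right-hand side, and the claim ``strictly interior'' is false. (For comparison: the paper offers no proof at all --- the statement is introduced as an ``easy observation'' --- so there is nothing there to fall back on.)

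Worse, the gap cannot be repaired, because the statement itself is false in the degrees you discarded. Take any non-constant holomorphic $f$ and set
\begin{equation*}
R=f^2+f\bar f+\bar f^2 ,
\end{equation*}
which is real-valued, and weighted homogeneous whenever $f$ is (e.g.\ $f=z_2^\beta$). Then
\begin{equation*}
R\cdot\Im f=\bigl(f^2+f\bar f+\bar f^2\bigr)\,\frac{f-\bar f}{2i}=\frac{f^3-\bar f^3}{2i}=\Im\bigl(f^3\bigr)
\end{equation*}
is pluriharmonic, yet $R$ is neither a real constant nor of the form $r\Re f$. In your notation this is precisely a ``spurious solution'' that survives homogeneity: $A=f^2$ and $B=f\bar f\neq 0$ satisfy $B(f-\bar f)=A\bar f-\bar A f$. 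More generally, $\Im(f^n)=\Im f\cdot\sum_{k=0}^{n-1}f^k\bar f^{\,n-1-k}$ gives a counterexample for every $n\geq 3$, so any correct dichotomy must admit these geometric-series factors as an additional alternative. You should also be aware that this defect is not yours alone: the paper uses this lemma implicitly right after \eqref{factor_im2} to force $K_1=\beta$ in the proof of Theorem~\ref{gc+tub}, and the same counterexample with $f=z_2^2$, $R=2\Re z_2^4+|z_2|^4$ produces the holomorphically nondegenerate model
\begin{equation*}
\Im w=\bigl(2\Re z_2^4+|z_2|^4\bigr)\Re\bigl(\bar z_1 z_2^2\bigr),
\end{equation*}
which admits the exotic symmetry $iz_2^2\partial_{z_1}$ and the tubular symmetry $i\partial_{z_1}+z_2^6\partial_w$ (indeed $\Re(i\partial_{z_1}P)=\tfrac12\Im z_2^6$ is pluriharmonic), while having chain summands with $K=4>\beta=2$; this contradicts the step ``hence $K_1=\beta$'' and, after checking inequivalence with \eqref{mod:new_gt+gc} and the other listed models, shows that classification is incomplete as stated.
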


We return to the proof of the \autoref{gc+tub}. Inspired by the previous Lemma \ref{lem:tub2} we compute
\begin{equation}\label{factor_im}
    \Re\left(i\dz1 P\right) = \Im \z2^{\beta}\left(\sum \frac{m}{2}\Re(\tau\z2^N)|\z2|^{2l}\left(\Re\bz1\z2^{\beta}\right)^{m-1}\right).
\end{equation}
If there is a summand with $m>1$, we can identify a mixed term that does not cancel out. Focus on the summand of maximal $N$ among those of maximal $m$, that in turn implies minimal possible $K$. In this summand the monomial 
\begin{equation*}
\z2^{N+K}\bz2^{K-m\beta}\bz1^{m-1}
\end{equation*}
is present and cannot cancel with any other monomial.
Since $N+K>0$ this term is not mixed unless $m=1$. Also, there has to be at least one summand with $m=1$, otherwise the model would be holomorphically degenerate. Next we show that each~$K$ is equal to $\beta$, which means the sum \eqref{factor_im} has only one summand.

Equation \eqref{factor_im} becomes
\begin{equation}\label{factor_im2}
        \Re\left(i\dz1 P\right) = \frac{\Im \z2^{\beta}}{2}\left(\sum_{2K+N=C} \Re(\tau\z2^N)|\z2|^{2(K-\beta)}\right),
\end{equation}
where $C=\beta+\frac{1-\mu_1}{\mu_2}$, $K\in\bbZpos$ and $N\in\bbZnon$. We show the sum is not pluriharmonic by focusing on the term of largest $K$, denoted $K_1$. The corresponding summand is 
\begin{equation*}
\tau\z2^{N_1+K_1-\beta}\bz2^{K_1-\beta} + 
\overline{\tau}\z2^{K_1-\beta}\bz2^{N_1+K_1-\beta}.
\end{equation*}
The first term is mixed if $K_1>\beta$. It could potentially cancel out in the entire sum if there is a term with parameters $K_2, N_2$ satisfying $N_2+K_2 = K_1$ and $K_2 = N_1+K_2$, which is impossible. Hence $K_1 = \beta$ as needed. It follows that models \eqref{mod:zurich} and \eqref{mod:new_gt+gc} are the only possibilities with one $X$-pair of length 2. Their perturbations, i.e., models \eqref{mod:gcgtgim}, \eqref{mod:zurichpert}, and \eqref{mod:new_gt+gcpert}, arise from the presence of additional chains of length $1$. Sums of such $X$-pairs can result in any real polynomial $Q$ in $\z2$. The precise structure of $\g$ for each discovered model needs to be checked directly using simultaneous diagonalization of $\g_0$ and general form of tubular symmetries. Model \eqref{mod:gcgtgim} arises as the only perturbation possessing an imaginary rotation. This concludes the proof for the situation when $X$ and $T$ commute. 

Now we focus on noncommuting $T$ and $X$. Since $\dim\g_c=1$ and $\g_n=0$, the weight of the bracket is nonpositive. First, let the commutator be of weight 0, i.e., $[T,X] \in \g_0$.

\begin{lemma}
    If $[T,X]=Y\neq0$ is rotation, then there are coordinates such that
    \begin{equation}\label{field:quadratic}
    X = i\z1\left(q\z1\dz1-p\z2\dz2\right).
\end{equation}
\end{lemma}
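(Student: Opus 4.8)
The plan is to exploit that $Y=[T,X]\neq 0$ being a rotation already places us in the hypotheses of \autoref{diagrots}: the model carries both a rotation and an exotic symmetry. So I would first pass to multitype coordinates in which $Y$ is diagonal and $X$ is monomial, i.e.
$$X=i\z1^{\alpha}\z2^{\beta}(q\z1\dz1-p\z2\dz2)$$
or one of the two degenerate forms $i\z2^\alpha\dz1$, $i\z1^\beta\dz2$ from \eqref{field:monomial_exotic}. Since the normalizations producing this are linear changes together with rescalings, I may simultaneously arrange $T$ to be a regular tubular field, as guaranteed in such coordinates by \cite{KMZ14}.

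Next I would read off the weight constraint. As $T$ is tubular its weight is $-\mu_j$ for some $j$, and since $Y$ is a rotation of weight $0$, the bracket weight equation forces the weight of $X$ to equal $\mu_j$. I would treat the representative case $T\in\gmu1$, the case $T\in\gmu2$ being handled the same way and yielding the analogous normal form with the roles of $\z1$ and $\z2$ exchanged. Exactly as in \autoref{lem:tub1}, a regular element of $\gmu1$ can be brought to $T=i\dz1+h\dw$ with $h$ weighted homogeneous of weight $1-\mu_1$.

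The heart of the argument is then a short bracket computation. Writing $X=f\dz1+g\dz2$ and using that $f,g,h$ do not depend on $w$, I obtain
$$[T,X]=i f_{\z1}\dz1+ig_{\z1}\dz2-X(h)\dw.$$
For this to equal $Y=\lambda_1\z1\dz1+\lambda_2\z2\dz2$, the $\dw$-term must vanish and the $\partial_z$-coefficients must be linear and diagonal, i.e. $if_{\z1}=\lambda_1\z1$ and $ig_{\z1}=\lambda_2\z2$. In the generic form $f=iq\z1^{\alpha+1}\z2^\beta$, so $f_{\z1}$ is proportional to $\z1^{\alpha}\z2^{\beta}$; requiring this to be a multiple of $\z1$ forces $\alpha=1$ and $\beta=0$, which is precisely $X=i\z1(q\z1\dz1-p\z2\dz2)$. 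Finally I would dispatch the degenerate forms: for $X=i\z2^\alpha\dz1$ one has $f_{\z1}=g_{\z1}=0$, so $[T,X]$ is a pure $\dw$-field and cannot be a nonzero rotation, while for $X=i\z1^\beta\dz2$ the only surviving coefficient $g_{\z1}$ is proportional to $\z1^{\beta-1}$, which can never be a multiple of $\z2$; in both cases one is forced to $Y=0$, contrary to hypothesis.

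The step I expect to be the main obstacle is the bookkeeping of the simultaneous normalization: ensuring that the coordinates furnished by \autoref{diagrots} (diagonal $Y$, monomial $X$) can be chosen so that $T$ is still a regular tubular field in the form $i\dz1+h\dw$. Once this is secured the bracket computation is immediate, and the weight count plays only a confirmatory role, since on its own it leaves open the spurious possibility $\alpha=0$ that the explicit form of $f_{\z1}$ is exactly what eliminates.
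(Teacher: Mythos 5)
Your skeleton follows the paper's proof up to exactly the step you flag as ``the main obstacle,'' and that obstacle is a genuine gap, not bookkeeping. The paper, like you, invokes \autoref{nonil} and \autoref{diagrots} to fix coordinates in which $Y$ is diagonal and $X$ is monomial, and uses the same weight count (the weight of $X$ must be $\mu_j$, so in the equal-weight case $\alpha+\beta=1$). But it never normalizes $T$: it keeps the general form $T=a\dz1+b\dz2+h\dw$ and lets the diagonality of $Y$ in the already-fixed coordinates do the work. Your appeal to \autoref{lem:tub1} to bring $T$ to $i\dz1+h\dw$ is not available here: in \autoref{lem:tub1} the tubular field is normalized \emph{first}, with nothing else to preserve, whereas now the coordinates are already spent on $Y$ and $X$. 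Killing the $\dz2$-component of $T$ requires a linear change mixing $\z1$ and $\z2$, and when $\lambda_1\neq\lambda_2$ the only linear changes preserving the diagonal form of $Y$ are diagonal ones (possibly composed with the swap of variables); these rescale $a$ and $b$ but can never remove a nonzero $b$. Consequently your bracket formula $[T,X]=if_{\z1}\dz1+ig_{\z1}\dz2-X(h)\dw$ only covers the case $b=0$. In particular, your dismissal of the degenerate form $X=i\z2^\alpha\dz1$ as giving a ``pure $\dw$-field'' is false for $b\neq0$: there $[T,X]=i\alpha b\z2^{\alpha-1}\dz1-X(h)\dw$, which is nonzero but \emph{not diagonal} --- and it is precisely this non-diagonality, not vanishing, that the paper uses to exclude the case $\alpha=-1$.

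The repair is the paper's route: compute $[T,X]$ with both constants present. For $X=i\z2^2\dz1$ one gets $[T,X]=2ib\z2\dz1-X(h)\dw$, which cannot equal the nonzero diagonal field $Y$; this excludes $\alpha=-1$, and symmetrically $\beta=-1$. The remaining nonnegative solutions of $\alpha+\beta=1$ are $(1,0)$ and $(0,1)$, exchanged by swapping variables, giving \eqref{field:quadratic}; the vanishing $b=0$ (and $Xh=0$) is then a \emph{consequence} of the commutator identity, recorded in the paper at \eqref{commutator_is_rotation}, not a permissible a priori normalization. Two smaller points: your generic-case argument leaves the branch $q=0$ open (there $f_{\z1}\equiv0$, so ``$f_{\z1}$ proportional to $\z1$'' is vacuous and $\lambda_1=0$); it must be closed by holomorphic nondegeneracy, since $p\beta-q\alpha\neq0$ fails for the fields that survive it. And the case $\mu_1>\mu_2$ needs the form \eqref{field:tubular-mu2}, $T=a\z2^n\dz1+b\dz2+\Tilde{h}\dw$, where the analogous normalization $\z1\mapsto\z1+c\z2^{n+1}$ again destroys the diagonality of $Y$ in general, so there too the bracket must be computed with $T$ left in general form.
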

\begin{proof}
By \autoref{nonil} and rigidity of $T$ and $X$ we have $Y=Y^{\Re}+Y^{\Im}$. 
We can change multitype coordinates such that $Y$ is diagonal and by \autoref{diagrots} we can assume $X$ to be monomial in these coordinates
\begin{equation*}
X = i\z1^\alpha\z2^\beta\left(q\z1\dz1-p\z2\dz2\right).
\end{equation*}
First, 
let the weights be equal. The general form of a tubular symmetry is
\begin{equation*}
T=a\dz1+b\dz2+h(z)\dw.
\end{equation*}
The weight of $X$ is $\mu_1$, hence its coefficients are quadratic, i.e. $\alpha+\beta = 1$. The choice $\alpha=-1$ leads to the field $X=i\z2^2\dz1$, but in this case the commutator
\begin{equation*}
    [T,X] = 2ib\z2\dz1
\end{equation*}
is not diagonal. Therefore (after interchanging variables if needed) we arrive at \eqref{field:quadratic}. If $\mu_1>\mu_2$, the reader can compute directly $[T,X]$ using the general form of $T$ given by \eqref{field:tubular-mu2} similarly as above and observe we always end up with \eqref{field:quadratic}.
\end{proof}

We can assume (after possible switching of variables) the exotic symmetry is given by \eqref{field:quadratic}. From the commutator condition
\begin{equation}\label{commutator_is_rotation}
    Y = [T,X] =\lambda_1\z1\dz1+\lambda_2\z2\dz2 =ia\left(2q\z1\dz1-p\z2\dz2\right)-ibp\z1\dz2-Xh\dw,
\end{equation}
it follows that $b=0$ and $Xh=0$.
If $Y^{\Re} \neq 0$, the description from the \autoref{gc+real} takes the form
\begin{equation*}
    \Im w = \sum_{2K+N = C} \Re\left(\tau_{K,N}Q^N\right)|\z1|^{2(Kp-m)}|\z2|^{2Kq}\left(\Re\z1\right)^m,
\end{equation*}
where $Q = \z1^p\z2^q$ and $m$ is fixed. We show that the expression has only one summand. Comparing the formula \eqref{commutator_is_rotation} with \eqref{real_rotation} we obtain
\begin{equation*}
    \frac{p}{2q}=\frac{Cp-m}{Cq}.
\end{equation*}
It follows $m=\frac{Cp}{2}$. The exponent of the $|\z1|^2$ factor is now $Kp-m=Kp-\frac{(2K+N)p}{2} = -\frac{N}{2}\geq 0$, hence $N=0$ and $Kp=m$. We have retrieved the model \eqref{mod:kolar_gc_7dim}. If $Y^{\Re}=0$, $Y$ would be an imaginary rotation. A comparison with \eqref{imaginary_rotation} shows this situation does not occur, since the rotation $Y$ has both $\dz1$ and $\dz2$ terms, while $\beta=0$. This concludes the case when $[T,X] \in \g_0$.

The only remaining case is $[T,X]=T_2$, where $T_2$ is also a tubular symmetry necessarily of higher weight. Now we can consider the commutator $[T_2,X]$ already covered in previous cases and conclude that no additional models are possible.\qed

\section{Solitary exotic symmetry}\label{sec:gc3dim}

A procedure for verifying that a given model admits an exotic symmetry is described in \cite{KM16}. Such an $M$ admits no other symmetry if and only if it is not of the equivalent to those given in Theorems \ref{gc+real},  \ref{gc+imag} and \ref{gc+tub}. In particular, by inspecting the proofs we notice the exotic symmetry is monomial and we obtain the following
\begin{proposition}
    Let a holomorphically nondegenerate model \eqref{model} possess a nontrivial $\g_c$, generated by a vector field  $X$. 
    If $X$ is not a monomial multiple of a linear diagonal vector field in any multitype coordinates, then there is no additional symmetry and $\dim \g = 3$.
\end{proposition}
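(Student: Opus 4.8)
The plan is to prove the contrapositive at the level of the grading. The three fields $W\in\g_{-1}$, $E\in\g_0$ and the exotic generator $X\in\g_c$ always lie in $\g$ and are independent, since their weights are $-1$, $0$ and some $\nu\in(0,1)$; hence $\dim\g=3$ precisely when no further graded piece survives, i.e. $\g_t=\gmu1\oplus\gmu2=0$, $\g_n=0$, $\g_1=0$ and the rigid part of $\g_0$ is trivial ($\gRe=\gIm=\gNil=0$, so $\g_0=\langle E\rangle$). First I would assume an additional symmetry exists and show that in each possible case there are multitype coordinates making $X$ monomial (one of the fields \eqref{field:monomial_exotic}), contradicting the hypothesis. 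Note that \autoref{nonil} already rules out a nilpotent rotation once $\g_c\neq0$, and by \autoref{lem:4.2} we may take $\g_0$ diagonalizable, so any surviving rotation is real or imaginary.

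The engine of the argument is \autoref{diagrots}: as soon as a nontrivial rotation $Y\in\g_0$ coexists with $X\in\g_c$, there are coordinates in which $X$ is monomial. Hence $\gRe=\gIm=0$, i.e. $\g_0=\langle E\rangle$. The $\g_1$ case collapses into this one: by \autoref{g1}(3) and the remark following it, a nontrivial $\g_1$ makes the model balanced and provides the imaginary rotation $iY$ attached to the complex reproducing field $Y$, so $\gIm\neq0$, which is impossible (equivalently, \autoref{gc+re+im} places the model at \eqref{mod:gc6dim}, where $X$ is again monomial). Likewise the $\g_n$ case reduces to rotations: in $\bbC^3$ a nontrivial $\g_n$ forces the model to be \eqref{models_with_gn} or \eqref{mod:9dim}, and both carry a nontrivial rotation (a real one for the first, imaginary ones for the second), so \autoref{diagrots} again yields $X$ monomial.

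The remaining and genuinely substantive case is tubular symmetry. Here I would invoke \autoref{gc+tub}: if $\g_t\neq0$ then $M_P$ is equivalent to one of the finitely many models \eqref{mod:zurich}--\eqref{mod:kolar_gc_7dim}, and the key point---read off from the construction inside the proof of \autoref{gc+tub} rather than from its statement---is that in every one of these normal forms the exotic symmetry is already monomial (it is $i\z2^\beta\dz1$ after \autoref{lem:tub1}, or the quadratic monomial \eqref{field:quadratic} in the non-commuting subcase). This contradicts the standing hypothesis, so $\g_t=0$. Collecting the four cases, every graded component beyond $\langle W,E,X\rangle$ vanishes, giving $\dim\g=3$ and no additional symmetry.

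I expect the main obstacle to be exactly the tubular case, because it is not a formal corollary: it rests on the full classification in \autoref{gc+tub} and on verifying, model by model, that the exotic symmetry produced there is monomial. The secondary delicate point is making the $\g_n$-reduction airtight---one must certify that each $\bbC^3$ model carrying a nontrivial $\g_n$ also carries a rotation, so that \autoref{diagrots} applies without first having to decide whether \eqref{mod:9dim} itself admits an exotic symmetry.
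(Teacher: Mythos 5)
Your proposal is correct and takes essentially the same route as the paper: the paper also argues by reducing any additional symmetry to the classifications of Theorems \ref{gc+real}, \ref{gc+imag} and \ref{gc+tub}, and then observes that in the proofs of those results the exotic symmetry is always monomial in suitable multitype coordinates, which contradicts the hypothesis. Your explicit treatment of the $\g_n$ and $\g_1$ cases (reducing each to the rotation case via \autoref{diagrots}) merely spells out what the paper leaves implicit.
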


The results of Section 3 also provide a  characterization in the case when $X$ is a monomial multiple of a linear diagonal vector field in some multitype coordinates. A model of the form  

\eqref{mod:monomial_gc} 
has a 3-dimensional symmetry algebra if and only if it is not of one of the special forms  
\eqref{mod:gcre} or \eqref{mod:gcim}.

\begin{example}
We now give an example of a model for which $X\in \g_c$ is a monomial multiple of a linear diagonal vector field in some multitype coordinates, and $\dim \g=3$. In fact, at the same time it provides the first example of a model with nontrivial $\g_c$ for which the Catlin multitype weights are different.

We consider $U_1= {z_1}^4 + {z_2}^2 {z_1}^3, \ \  U_2= 2 {z_2} {z_1}^8,  \ \  U_3= 2{z_1}^{13},$ with a vector field $X$ given by 
$X= i{z_1}^5\partial_{z_2}.$ The weights are $\mu_1 = \frac{1}{17}, \ \mu_2 = \frac{1}{34}.$
Hence the model is given by
\begin{equation*}
    \Im w = 8 |\z1|^6\left(\Re\z1^5\bz2\right)^2 +  4 |\z1|^{8}\Re{\z1^9}.
\end{equation*}

\end{example}

The following result was proved in \cite{KM17}.
\begin{theorem} \label{th2}There exists a non-monomial model $M$ such that $\dim
\g_c
=1$ and $\dim
\g
=3$.
\end{theorem}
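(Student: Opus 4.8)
The plan is to reduce everything to a single construction. By the Proposition preceding the statement, any holomorphically nondegenerate model whose generator $X$ of $\g_c$ is not a monomial multiple of a linear diagonal field in any multitype coordinates automatically satisfies $\dim\g=3$; and since $\dim\g_c\le 1$ in $\bbC^3$, such a model has $\dim\g_c=1$. Thus it suffices to exhibit one holomorphically nondegenerate model carrying a genuinely non-monomial exotic symmetry. I would build it by running the machinery of \autoref{chainsum} and \autoref{xpair} in reverse: fix a weighted-homogeneous holomorphic field $X$ of weight $\nu\in(0,1)$, produce an $X$-pair of chains, and \emph{define} $P$ to be the resulting chain sum \eqref{chainsumeq}. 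Then $X$ annihilates the real part by construction, and it is rigid (independent of $w$), so $X\in\g_c$ immediately.

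The first real constraint is that $X$ must support a chain of length at least two. A length-one $X$-pair consists of two polynomials in $\ker X$, and since $\ker X$ is generated by the primitive first integral $Q$ of $X$, its chain sum is a function of $Q$ and $\overline Q$ alone; the field $X$ itself is then complex-tangential and annihilates $P$, so the model is holomorphically degenerate. Hence I need $X$ with a first integral $Q$ together with a weighted-homogeneous ``potential'' $\phi$ satisfying $X(\phi)=cQ$, which furnishes a length-two chain $U_1=Q,\ U_0=\phi$ and, with the antihermitian partner supplied by \autoref{xpair}, a model of the shape \eqref{monomialsum}. Holomorphic nondegeneracy would then be checked exactly as after \eqref{monomialsum}: one verifies that the exponent data of the chain sum are independent, so that $P$ is not a function of fewer than two algebraically independent quantities and no complex-tangential field kills it.

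The decisive step is proving non-monomiality, i.e. that $X$ is not of the form \eqref{field:monomial_exotic} in \emph{any} multitype coordinates. Here I would use a biholomorphic invariant of $X$, namely the divisorial (codimension-one) part of its zero set. Every exotic monomial field \eqref{field:monomial_exotic} has positive weight $\nu=\alpha\mu_1+\beta\mu_2$, which forces $\alpha\ge 1$ or $\beta\ge 1$; consequently, in each of the three cases of \eqref{field:monomial_exotic} the coefficients are divisible by a nonconstant coordinate monomial, so $X$ vanishes on a coordinate hyperplane and its zero divisor is a union of the coordinate axes. It is therefore enough to arrange that the zero divisor of the chosen $X$ is not a union of coordinate axes, e.g. that it is empty, or irreducible and singular; this property is preserved by every weight-preserving change of multitype coordinates and hence obstructs monomialization once and for all.

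The main obstacle is that the two requirements pull in opposite directions, and reconciling them is the heart of the argument. The chain condition $X(\phi)\propto Q$ is what makes the climbing operation of a pure $X$-pair --- ``divide the first integral by the common factor'' --- land back in the polynomial ring; for a monomial $X=\z1^\alpha\z2^\beta(q\z1\dz1-p\z2\dz2)$ this works precisely because both the common factor and $Q=\z1^p\z2^q$ are monomials. By contrast, the natural non-monomial candidates fail exactly here: for the Hamiltonian field $X=i(3\z2^2\dz1+2\z1\dz2)$ of the cusp $Q=\z1^2-\z2^3$ one checks that $Q^a$ never lies in the image of $X$ on polynomials, so no chain of length $\ge 2$ exists and no nondegenerate model is produced; the same collapse occurs for every field of the form (non-monomial factor) times a linear diagonal field, since weighted homogeneity then forces that factor to be monomial after all. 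The construction must therefore use an $X$ whose first integral and raising operator are compatible \emph{without} a monomial common factor. Once such an $X$ and its nondegenerate chain-sum model are produced, the invariant above gives non-monomiality, and the preceding Proposition then yields $\dim\g=3$ and $\dim\g_c=1$, completing the proof.
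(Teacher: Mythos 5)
You have the right skeleton, and it matches the paper's: realize $P$ as a chain sum for a chosen $X$, certify non-monomiality by a coordinate-invariant feature of the vanishing of $X$, and let the preceding Proposition deliver $\dim\g=3$ and $\dim\g_c=1$. But your proposal does not prove the theorem, because the witness is never produced. This is an existence statement, and its entire mathematical content is the construction you defer: after correctly checking that the natural candidates collapse (the cusp Hamiltonian admits no chain of length $\geq 2$), you end with ``once such an $X$ and its nondegenerate chain-sum model are produced, \dots''. That conditional \emph{is} the theorem. The paper (quoting \cite{KM17}) closes exactly this gap with an explicit example,
\begin{equation*}
X = \z1\z2^2(5\z1-6\z2)\dz1 - \z2^3(4\z1-3\z2)\dz2, \qquad P = i\z1^2\z2^3(\z1-\z2), \qquad Q = 3\z1^3\z2^5(\z1-\z2),
\end{equation*}
for which a direct computation gives $X(Q)=0$ and $X(P)=iQ$, so $\{P,Q\}$ is a length-two chain and $X$ is an exotic symmetry of the holomorphically nondegenerate model $\Im w = \Re\left(P\overline{Q}\right)$; the proof is this finite verification, and nothing in your text supplies a substitute for it.

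There is a second, independent problem: your non-monomiality certificate is the wrong invariant, and the known example shows it. You propose to arrange that the divisorial part of the zero set of $X$ is not a union of coordinate axes (``empty, or irreducible and singular''). For the field above the gcd of the coefficients is $\z2^2$, so its zero divisor is supported on the coordinate axis $\{\z2=0\}$ --- exactly the same divisor as for the monomial exotic field $i\z2^2(q\z1\dz1-p\z2\dz2)$. Thus your invariant cannot certify non-monomiality of this (the only known) example, and you offer no evidence that an exotic symmetry with empty or singular-irreducible zero divisor exists at all; your own cusp computation points to the opposite tension, namely that coprime coefficients seem incompatible with chains of length $\geq 2$. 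What actually works, and what the paper's remark about ``three different complex lines'' encodes, is a finer invariant: here weighted homogeneity of $P$ forces $\mu_1=\mu_2$, so multitype coordinate changes are linear, and the complex lines through $0$ invariant under $X$ (the linear factors of $\z1 f_2 - \z2 f_1 = -9\z1\z2^3(\z1-\z2)$, where $f_1,f_2$ are the coefficients of $X$) are the three distinct lines $\z1=0$, $\z2=0$, $\z1=\z2$, whereas any monomial multiple of a diagonal linear field has at most the two coordinate axes as invariant lines. Equivalently, the primitive part of the paper's $X$ is quadratic, while every field of the form \eqref{field:monomial_exotic} has linear or constant primitive part. Without the explicit example and with a certificate the example does not satisfy, the argument has a genuine gap at both of its essential steps.
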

The model which proves the statement is provided by
\begin{equation}
P(z_1, z_2) = i z_1^2 z_2^3 (z_1 - z_2),
\quad
Q(z_1, z_2) =  3 z_1^3 z_2^5 (z_1 - z_2)
\end{equation}
and a vector field
\begin{equation}
 X =z_1 z_2^2 (5z_1 - 6z_2) \partial_{z_1}  - z_2^3 (4z_1 - 3 z_2)
\partial_{z_2}.
\end{equation}
It is easy to check that $X(P) = i Q$ and $X(Q) = 0$, and
therefore $X$ is an exotic symmetry for $M$ given by $ \Im w =\Re P \bar Q$. Since the coefficients of $X$ vanish along three different complex lines, it follows that $X$ is not a monomial multiple of a linear diagonal vector field in any multitype coordinates.

\end{document}